\def\HYPER{\relax}
\renewcommand{\href}[2]{\relax}
\renewcommand{\url}[1]{#1}
\numberwithin{equation}{section}
\def\v{\varphi}
\def\Re{\mathop{\mathrm{Re}}}
\def\Im{\mathop{\mathrm{Im}}}
\def\D{\mathbb D}
\def\R{\mathbb R}
\def\C{\mathbb C}
\def\Aut{{\sf Aut}}
\def\const{{\rm const}}
\DeclareMathOperator{\clD}{\mathrm{cl}_\UD}
\newcommand{\widesim}[1]{
  \mathrel{\lower.36ex\hbox{$\overset{#1}{\scalebox{1.25}[1]{$\sim$}}$}}
}
\newcommand{\hsegment}[1]{{[#1]_{h}}}
\newtheorem{theorem}{Theorem}[section]
\newtheorem{lemma}[theorem]{Lemma}
\newtheorem{proposition}[theorem]{Proposition}
\theoremstyle{definition}
\newtheorem{definition}[theorem]{Definition}
\newtheorem{example}[theorem]{Example}
\theoremstyle{remark}
\newtheorem{remark}[theorem]{Remark}
\newtheorem{problem}[theorem]{Problem}
\numberwithin{equation}{section}
\newtheorem*{remark*}{Remark}
\newcommand{\UD}{\mathbb{D}}
\newcommand{\UC}{\partial\UD}
\newcommand{\Real}{\mathbb{R}}
\newcommand{\Natural}{\mathbb{N}}
\newcommand{\di}{\mathrm{d}\hspace{.07em}}
\newcommand{\anglim}{\angle\lim}
\renewcommand{\emptyset}{\varnothing}
\renewcommand{\ge}{\geqslant}
\renewcommand{\le}{\leqslant}
\renewcommand{\geq}{\geqslant}
\renewcommand{\leq}{\leqslant}
\newcommand{\proofof}[1]{{\fontseries{bx}\fontshape{it}\selectfont Proof of #1}}
\newcommand{\StepPm}[1]{\medskip\noindent{\textsc{Proof of #1~}}}
\newenvironment{alphlist}{\begin{enumerate}[label={\bf (\alph*)}, ref={\rm (\alph*)}, left=0em]%
\everydisplay{\makeatletter\def\@eqnum{\normalfont(\theequation)}\makeatother}}{\end{enumerate}}
\newenvironment{romlist}{\begin{enumerate}[label={\bf (\hskip.07em\roman*\hskip.07em)}, ref={\rm (\hskip.05em\roman*\hskip.05em)}, left=-0.3em]%
\everydisplay{\makeatletter\def\@eqnum{\normalfont(\theequation)}\makeatother}}{\end{enumerate}}
\newcommand{\StepG}[1]{\medskip\noindent\textsc{#1.}}
\begin{document}
\title[Hyperbolic convexity of holomorphic level sets]{Hyperbolic convexity of holomorphic level sets}

\author[I. Efraimidis]{Iason Efraimidis$^{\,\dag}$}
\address{Iason Efraimidis: Departamento de Matem\'{a}ticas\\
	Universidad Aut\'{o}noma de Madrid\\
	28049 Madrid,
	Spain.}\email{iason.efraimidis@uam.es}

\thanks{$^\dag$The first author is partially supported by PID2019-106870GB-I00 from MICINN, Spain}

\author[P. Gumenyuk]{Pavel Gumenyuk}
\address{Pavel Gumenyuk: Dipartimento di Matematica\\ Politecnico di Milano\\ via E. Bonardi 9\\ Milan 20133, Italy.}
\email{pavel.gumenyuk@polimi.it}

\date\today

\dedicatory{Dedicated to the memory of  Prof. Christian Pommerenke}

\subjclass[2020]{Primary 30C80, 30F45, 52A55; secondary 30J99, 30H05, 51M10}
%\keywords{}

\begin{abstract}
We prove that the sublevel set $\big\{z\in\mathbb D\colon k_\UD\big(z,z_0\big)-k_\UD\big(f(z),w_0\big)<\mu\big\}$, ${\mu\in\mathbb R}$, is geodesically convex with respect to the Poincar\'e distance $k_\UD$ in the unit disk~$\mathbb D$ for every ${z_0,w_0\in\mathbb D}$ and  every holomorphic ${f:\mathbb D\to\mathbb D}$ if and only if ${\mu\le0}$. An analogous result is established also for the set $\{z\in\UD\colon 1-|f(z)|^2<\lambda(1-|z|^2)\}$, ${\lambda>0}$. This extends a result of Solynin~(2007) and solves a problem posed by Arango, Mej\'{\i}a and Pommerenke~(2019). We also propose several open questions aiming at possible extensions to more general settings.
\end{abstract}

\maketitle

\let\subs=\subset

\tableofcontents

\section{Introduction}
In 2007, Solynin~\cite{SolyninPAMS} proved a remarkable result, which in a slightly different form can be stated as follows.\\[1ex] \textit{Let $f:\UD\to\UD$ be a holomorphic self-map of the unit disk
  $\UD:={\{z\in\C\colon |z|<1\}}$
equipped with the Poincar\'e distance $k_\UD$, and let $z_0$ and $w_0$ be two points in~$\UD$. Then
$$
 \mathcal D(f)=\mathcal D(f;z_0,w_0):=\big\{z\in\UD\colon k_\UD(z,z_0)-k_\UD(f(z),w_0)<0\big\}
$$
is either empty or it is a hyperbolically convex domain, which means that for every ${z_1,z_2\in  \mathcal D(f)}$, the hyperbolic geodesic segment $\hsegment{z_1,z_2}$ joining these two points is entirely contained in~$\mathcal D(f)$.}
\medskip

Similarly to the classical Schwarz\,--\,Pick Lemma playing a fundamental role in Complex Analysis, both in one and higher dimensions, see  \textit{e.g.}~\cite{AbateNotes}, this result relates holomorphic mappings with the hyperbolic geometry in~$\UD$. To some extend, the interplay of hyperbolic convexity and holomorphicity has been previously exploited via J{\o}rgensen's Theorem~\cite{Joergensen}, see \textit{e.g.} \cite{Flinn, CarmPomm1997, ZipperAlg, BK2024}. It is also worth mentioning that hyperbolically convex functions, \textit{i.e.} conformal mappings onto hyperbolically convex domains, admit analytic characterization; their properties have been thoroughly studied, see \textit{e.g.}~\cite{MP2000, Texas, Rohde, MaKourou} and references therein. Recently, using Solynin's result stated above, it has been found in~\cite{Resolvents} that non-linear resolvents of infinitesimal generators are hyperbolically convex functions.

\smallskip

In this paper, we give a different proof and extend Solynin's result to two families of nested subsets~of~$\UD$:
$$
  \mathcal D_\mu(f)\subs\mathcal D_0(f)=\mathcal D(f)=\Omega_1(f)\subs\Omega_\lambda(f),\quad \mu<0,~\lambda>1.
$$

The sets $\mathcal D_\mu(f)$, ${\mu\in\Real}$, are defined in a very natural way, namely, as
$$
  \mathcal D_\mu(f):=\big\{z\in\UD\colon k_\UD(z,z_0)-k_\UD(f(z),w_0)<\mu\big\}.
$$
In Section~\ref{S_hyperbolic-distance-family}, we prove that for every $\mu<0$, this set is either empty or hyperbolically convex.

The definition of $\Omega_\lambda$ becomes more intuitive if we observe that replacing $f$ by the composition $T_2\circ f\circ T_1$, where $T_1$ and $T_2$ are suitable conformal automorphisms of $\UD$, we may suppose ${z_0=w_0=0}$ and accordingly rewrite ${k_\UD(f(z),w_0)>k_\UD(z,z_0)}$ in euclidian terms as ${|f(z)|>|z|}$, or equivalently as
$$
  \nu_f(z):=\frac{1-|f(z)|^2}{1-|z|^2}=\frac{\lambda_{\UD}(z)}{\lambda_{\UD}\big(f(z)\big)}<1,
$$
where $\lambda_\UD$ stands for the density of the Poincar\'e metric.

The result of Solynin stated above was conjectured (and proved for a special case) by Mej\'\i{a} and Pommerenke in their
study~\cite{MP2005} of the analytic fixed point function associated to~$f$,  which was shown to map~$\UD$ conformally onto $\mathcal D(f)=\Omega_1(f):={\{z\in\UD\colon \nu_f(z)<1\}}$; see also~\cite{Solynin2}.
Much later, Arango,  Mej\'\i{a}, and Pommerenke~\cite{AMP2019} considered the family
$$
 \Omega_\lambda(f):={\{z\in\UD\colon \nu_f(z)<\lambda\}},\quad {\lambda>0},
$$
and asked whether $\Omega_\lambda$ is necessarily hyperbolically convex for $\lambda>1$. In Section~\ref{S_lambda-family}, we give an affirmative answer to this question for all~${\lambda\ge 1}$. Surprisingly, this more general result is obtained by a simpler method as compared to the very elegant, but more involved ideas employed in~\cite{SolyninPAMS}.

It is also worth mentioning that $\mathcal D_\mu(f)$ fails to be hyperbolically convex if~${\mu>0}$ already for automorphisms (see Example~\ref{EX_mu>0}). Therefore, even though the interpretation of the level sets~$\Omega_\lambda(f)$ in terms of the hyperbolic geometry seems to be less direct than that of~$\mathcal D_\mu(f)$, these domains $\Omega_\lambda(f)$ appear to be a suitable way to extend  the family $\mathcal D_\mu(f)$ beyond the ``critical'' level set~$\mathcal D(f)$. Finally, note that the situation is symmetric: $\Omega_\lambda(f)$ are in general not hyperbolically convex for~${\lambda\in(0,1)}$, see Example~\ref{EX_lambda-family}.

The plan of the paper is as follows.
In the next section, we recall the basics of the hyperbolic geometry in~$\UD$ and prove some technical lemmas.
We prove that the domains $\Omega_\lambda(f)$, ${\lambda\ge1}$, and $\mathcal D_\mu(f)$, ${\mu<0}$, are hyperbolically convex in Sections~\ref{S_lambda-family} and~\ref{S_hyperbolic-distance-family}, respectively.
In the concluding Section~\ref{S_further}, we pose a few open questions indicating possible ways to develop similar results in various settings.

\section{Preliminaries and auxiliary results}
Recall that the hyperbolic (Poincar\'e) metric in~$\UD$ at a point ${z\in\UD}$ is given by $(u,v)\mapsto\lambda_\UD^2(z)\Re(u\overline v)$, ${u,v\in\C}$, where $\lambda_\UD(z):={2/(1-|z|^2)}$ is often referred to as the hyperbolic density in~$\UD$. This metric induces a  distance given by
$$
  k_\UD(z,w):=\log\frac{1+\rho_\UD(z,w)}{1-\rho_\UD(z,w)}, \quad\text{where~}~\rho_\UD(z,w):=\left|\frac{z-w}{1-\overline w z}\right|.
$$
The classical Schwarz\,--\,Pick Lemma, see  \textit{e.g.}~\cite[Theorem~3.2]{HBmetric1}, states that holomorphic self-maps of~$\UD$ do not increase the Poincar\'e distance~$k_\UD$, while its isometries are exactly the conformal automorphisms of~$\UD$, the set of which we denote by~$\Aut(\UD)$. In euclidian terms, the infinitesimal version of this fact can be expressed precisely as follows:
\begin{equation}\label{EQ_SchP_inf}
 |f'(z)|\le\frac{\lambda_\UD(z)}{\lambda_\UD(f(z))}=\frac{1-|f(z)|^2}{1-|z|^2}\,=:\,\nu_f(z)
\end{equation}
for all $z\in\UD$ and  every holomorphic function~${f:\UD\to\UD}$. The equality in~\eqref{EQ_SchP_inf} holds for some~${z\in\UD}$ if and only if~${f\in\Aut(\UD)}$, in which case equality holds for all~${z\in\UD}$. For further details, we refer interested readers to~\cite{HBmetric1,HBmetric2}.

\begin{remark}\label{RM_JWC}
As noticed in \cite[Section~2.2]{AMP2019}, the quantity $\nu_f(z)$ is also related to the boundary behavior of~$f$ and~$f'$. Namely, let ${\zeta\in\UC}$. Then, according to the classical Julia\,--\,Wolff\,--\,Carath\'eodory Theorem,
$$
\alpha_f(\zeta):=\liminf_{\UD\ni z\to\zeta}{\,\nu_f(z)}\,<\,+\infty
$$
if and only if the angular limits
$\anglim_{z\to\zeta}f(z){=:f(\zeta)}$, $\anglim_{z\to\zeta}f'(z)=:{f'(\zeta)}$
exist finitely and $f(\zeta)\in\UC$. Moreover, if these equivalent conditions hold, then ${\nu_f(z)\to\alpha_f(\zeta)}$ as ${z\to\zeta}$ non-tangentially, $\zeta\,\overline{f(\zeta)}\,f'(\zeta)\,=\,\alpha_f(\zeta)\,>\,0$, and
$$
\big|f(r\zeta)-f(\zeta)\big|<2\alpha_f(\zeta)\,(1-r)\quad\text{for all~$~r\in[0,1)$,}
$$
see \textit{e.g.} \cite[\S\S4.2--4.4]{ShapiroBook}, \cite[\S1.2]{Abate1} or \cite[Chapter~2]{Abate2}.
\end{remark}

Returning to the hyperbolic geometry of~$\UD$, we recall that geodesics in this geometry  are diameters of~$\UD$ and arcs of circles orthogonal to~$\UC$; as a consequence, for each pair of points ${z_1,z_2\in\UD}$ there is a unique geodesic segment $\hsegment{z_1,z_2}$ joining them. This gives a natural way to define an analogue of the classical euclidian notion of convexity in the context of the hyperbolic geometry:
\begin{definition}
A set $\Omega\subs\UD$ is said to be \textsl{hyperbolically convex}, or \textsl{h-convex} for short, if ${\hsegment{z_1,z_2}\subs\Omega}$ whenever ${z_1,z_2\in\Omega}$. If, in addition, every hyperbolic geodesic intersects $\partial\Omega$ at two points at most, then $\Omega$ is said to be \textsl{strictly h-convex}.
\end{definition}

We will make use of a simple observation connecting hyperbolic convexity to starlikeness. Throughout the paper, we will use the symbol ``$\subset$'' to denote the inclusion in the wide sense.
\begin{definition}
A set $\Omega\subs\C$ is said to be \textsl{starlike} if ${0\in\Omega}$ and for every straight line $L\subs\C$ passing through the origin, the intersection $L\cap\Omega$ is connected.
\end{definition}
\begin{lemma}\label{LM_starlike-h-convex}
A set $\Omega\subs\UD$ is h-convex if and only if for every $T\in\Aut(\UD)$ such that ${0\in T(\Omega)}$, the set $T(\Omega)$ is starlike.
\end{lemma}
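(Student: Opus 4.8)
The plan is to reduce everything to two facts recalled above: every $T\in\Aut(\UD)$ is a $k_\UD$-isometry and hence carries geodesic segments to geodesic segments, so that $\hsegment{T(z_1),T(z_2)}=T\big(\hsegment{z_1,z_2}\big)$; and the hyperbolic geodesic issuing from the origin along a given line through~$0$ is exactly the corresponding Euclidean diameter of~$\UD$. I would first record the consequence that if two points $p,q$ lie on one diameter of~$\UD$, then by uniqueness of geodesics $\hsegment{p,q}$ coincides with the Euclidean segment $[p,q]$; both implications rest on this observation.

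For the ``only if'' direction, fix $T\in\Aut(\UD)$ with $0\in T(\Omega)$. Since isometries preserve geodesics, h-convexity of~$\Omega$ passes to $T(\Omega)$. Given a line $L$ through the origin and any two points $p,q\in L\cap T(\Omega)$, h-convexity gives $\hsegment{p,q}\subs T(\Omega)$, and by the observation this segment is the Euclidean $[p,q]$; hence $L\cap T(\Omega)$ contains the segment between any two of its points, so it is an interval and therefore connected. As $0\in T(\Omega)$, the set $T(\Omega)$ is starlike.

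For the converse, take arbitrary $z_1,z_2\in\Omega$ and choose $T\in\Aut(\UD)$ with $T(z_1)=0$. Then $0\in T(\Omega)$, so $T(\Omega)$ is starlike by hypothesis. Writing $L$ for the line through~$0$ and $T(z_2)$, the intersection $L\cap T(\Omega)$ is connected, hence an interval containing both $0$ and $T(z_2)$; thus the Euclidean segment $[0,T(z_2)]$ lies in $T(\Omega)$, and by the observation it equals $\hsegment{0,T(z_2)}=\hsegment{T(z_1),T(z_2)}$. Applying the isometry $T^{-1}$ then yields $\hsegment{z_1,z_2}=T^{-1}\big(\hsegment{T(z_1),T(z_2)}\big)\subs\Omega$, which is h-convexity.

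The only point needing care\,---\,the mild obstacle\,---\,is the passage between ``connected'' and ``convex'' on a line: one must use that a subset of a line is connected precisely when it is an interval, so that the connectedness hypothesis delivers exactly the inclusion of the segment between any two of its points. Everything else follows mechanically from the isometry property of automorphisms and the identification of geodesics through the origin with Euclidean diameters.
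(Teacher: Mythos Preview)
Your argument is correct and follows the same idea as the paper's proof: both rely on the fact that hyperbolic geodesic segments through the origin coincide with Euclidean segments, and that automorphisms carry geodesics to geodesics. The paper simply compresses both implications into the single observation that $\hsegment{z_1,z_2}\subs\Omega$ if and only if $[0,T(z_2)]\subs T(\Omega)$ for some (hence every) $T\in\Aut(\UD)$ with $T(z_1)=0$, whereas you unpack this in full detail.
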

\begin{proof}
The lemma follows easily from the observation that given two points ${z_1,z_2\in\Omega}$, ${z_1\neq z_2}$, the hyperbolic geodesic segment $\hsegment{z_1,z_2}$ is contained in~$\Omega$ if and only if the euclidian segment $[0,T(z_2)]$ is contained in~$T(\Omega)$ for some (and hence every) ${T\in\Aut(\UD)}$ with ${T(z_1)=0}$.
\end{proof}

We will also need the following technical lemma that borrows an idea from \cite[Sect.\,1.5]{Sugawa2005}.
\begin{lemma}\label{LM_starlike}
Let $D\subs\UD$ be a domain containing the origin. Suppose that $D$ is not starlike. Then there exists ${a_0\in\UD\cap\partial D}$ and ${\delta>0}$ such that the euclidian segment  ${\big[(1-\delta)a_0,(1+\delta)a_0\big]}$ is contained in the relative closure $\clD(D)$ of the domain~$D$ in~$\UD$.
\end{lemma}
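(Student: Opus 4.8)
The plan is to exploit the \emph{radial visibility kernel} of $D$, namely the set
$$
  V := \big\{z\in\UD \colon [0,z]\subset D\big\},
$$
where $[0,z]$ denotes the euclidian segment joining $0$ to $z$. First I would record three elementary properties of $V$. It contains $0$ and is star-shaped with respect to the origin by construction; it is open, since whenever $[0,z]\subset D$ the segment is a compact subset of the open set $D$ and hence has a neighbourhood contained in $D$; and, crucially, $V\neq D$. Indeed, as $D$ is not starlike there is a line $L$ through the origin for which $L\cap D$ is disconnected, and since $L\cap D$ is open in $L$ and contains $0$, one finds $p\in D\cap L$ together with a point of $L$ lying strictly between $0$ and $p$ that is not in $D$; thus $[0,p]\not\subset D$, i.e.\ $p\in D\setminus V$.

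Next I would locate a good boundary direction by a connectedness argument. Since $V$ is a nonempty, relatively open, proper subset of the connected set $D$, it is not relatively closed in $D$, whence $\partial V\cap D\neq\emptyset$; fix a point $w$ in this set. Being a boundary point of the open set $V$, $w$ is the limit of a sequence $w_n\in V$, and for each $\sigma\in[0,1]$ we have $\sigma w_n\in[0,w_n]\subset D$ and $\sigma w_n\to\sigma w$; hence the whole segment $[0,w]$ lies in $\overline D$. On the other hand $w\notin V$, so $[0,w]\not\subset D$, while $w\in D$ and $0\in D$. I would then set $\sigma_0:=\sup\{\sigma\in(0,1)\colon \sigma w\notin D\}$ and $a_0:=\sigma_0 w$. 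Openness of $D$ near $w$ gives $\sigma_0<1$, the defining supremum gives $(\sigma_0,1]\,w\subset D$, and closedness of $\UD\setminus D$ gives $a_0\notin D$; consequently $a_0\in\UD\cap\partial D$.

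Finally I would extract the segment. Since $a_0=\sigma_0 w$ with $\sigma_0\in(0,1)$, the radial segment $[(1-\delta)a_0,(1+\delta)a_0]=\{\sigma w\colon \sigma\in[(1-\delta)\sigma_0,(1+\delta)\sigma_0]\}$ is contained in $[0,w]$ as soon as $0<\delta\le(1-\sigma_0)/\sigma_0$, and therefore in $\clD(D)=\overline D\cap\UD$, which is the desired conclusion (the outer radial side $(a_0,w]$ is in fact inside $D$, and the inner side inside $\overline D$). I expect the only delicate step to be the middle one: the advantage of working with $V$ rather than with a single segment $[0,p]$ is precisely that a boundary point $w$ of $V$ lying in $D$ yields a radius $[0,w]\subset\overline D$ that surrounds the blocking boundary point $a_0$ on \emph{both} sides. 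This two-sided control is exactly what a naive choice of $a_0$ as a first or last exit point along a fixed radius fails to provide, since beyond such an exit point one may immediately enter the interior of the complement of~$D$.
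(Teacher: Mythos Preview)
Your argument is correct. The visibility kernel $V$ is open, star-shaped, contained in $D$, and proper in $D$; connectedness of $D$ then forces $\partial V\cap D\neq\emptyset$, and any $w$ in this set has $[0,w]\subset\overline D$ while $[0,w]\not\subset D$, from which you extract $a_0$ as the last exit point along the segment. All the bookkeeping checks out (in particular $0<\sigma_0<1$ because $0\in D$, $w\in D$, and $D$ is open).

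The paper's proof encodes the same radial data through the radius function $R(\omega)=\sup\{r>0:[0,r\omega]\subset D\}$ on $\UC$ and argues by contradiction: it shows that $R$ is always lower semicontinuous, and that if the conclusion of the lemma failed then $R$ would also be upper semicontinuous, hence continuous, so that $\{R(\omega)\omega\}$ would be a starlike Jordan curve in $\partial D$, contradicting non-starlikeness. Your route is more direct: rather than analysing the regularity of $R$ and inferring a global topological obstruction, you use connectedness of $D$ to locate a single point $w\in\partial V\cap D$ and build $a_0$ explicitly. In the paper's language, such a $w$ corresponds to a direction at which $R$ fails to be upper semicontinuous; you arrive there without naming semicontinuity at all. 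The gain is a constructive, non-contrapositive argument that avoids the last step of the paper's proof (deducing starlikeness of $D$ from continuity of $R$). The paper's approach, on the other hand, makes the global picture of $\partial D$ as a graph over $\UC$ more transparent.
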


\begin{proof}
Consider the radius function
$$
R(\omega) := \sup\{r>0 : [0,r\omega] \subs D \}, \qquad \omega\in\UC.
$$
It is not difficult to see that $R$ is lower semicontinuous. Indeed, if ${\omega_0\in\UC}$ and $0<r<R(\omega_0)$, then the straight line segment ${[0,r\omega_0]}$ is contained in~$D$. Since this segment is compact, it follows that
$$
 \{t\omega_0 e^{i\theta}\colon t\in[0,r],~\theta\in(-\epsilon,\epsilon)\big\}\,\subs\,D
$$
for some $\epsilon>0$. Therefore, ${R(\omega)>r}$ for all ${\omega\in\UC}$ sufficiently close to~$\omega_0$, as desired.

Moreover, if ${\omega_0\in\UC}$ and if every neighborhood of $R(\omega_0)\omega_0$ contains some points of ${\big\{r\omega_0:r\ge0\big\}\setminus\clD(D)}$, then the radius function is also upper semicontinuous at~$\omega_0$. To see this, we assume that $R(\omega_0)<1$ since it is otherwise evident. Let ${R(\omega_0)<r<1}$ and observe that, by assumption, there exists $\rho\in\big(R(\omega_0),r\big)$ for which $\rho\omega_0 \notin \clD(D)$. Since $\UD\backslash\clD(D)$ is an open set,  for all $\omega$ sufficiently close to~$\omega_0$ we have $\rho\omega \notin \clD(D)$ and hence ${R(\omega) < \rho < r}$, as desired.

Finally, noting that $R(\omega) \omega \in \partial D$ for every $\omega \in\UC$, the above argument shows that if the conclusion of the lemma fails, then the radius function is continuous on~$\UC$. Therefore, $\partial D$ is a starlike curve, in contradiction to the hypothesis.
\end{proof}

We complete this section with a lemma that allows us to reduce proving hyperbolic convexity to checking some local property at the boundary,  namely, the existence of supporting hyperbolic half-planes. Denote $$\UD(z_0,r):={\{z\in\C\colon |z-z_0|<r\}}.$$
\begin{lemma}\label{LM_main}
Let $u:\UD\to\R$ be a continuous function in~$\UD$ and let $\Omega$ be a connected component of~$\{z\in\UD\colon u(z)>0\}$. Suppose that for every point ${\zeta\in\UD\cap\partial\Omega}$, the function $u(z)$ is differentiable at~${z=\zeta}$ and satisfies the following two conditions:
\ifdefined\Pavel\begin{alphlist}\else\begin{enumerate}\fi
\item\label{IT_LM_main-grad-nonnullo}  $\nabla u(\zeta)\neq0$  and
\item\label{IT_LM_main-geodesic}  there exists $\varepsilon>0$ such that $\gamma_\zeta\,\cap\,\clD(\Omega)\,\cap\,\UD(\zeta,\varepsilon)=\{\zeta\}$, where $\gamma_\zeta$ stands for the hyperbolic geodesic passing through the point~$\zeta$ and orthogonal at that point to $\nabla u(\zeta)$.
\ifdefined\Pavel\end{alphlist}\else\end{enumerate}\fi
Then $\Omega$ is a strictly h-convex domain.
\end{lemma}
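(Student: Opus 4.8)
The plan is to reduce hyperbolic convexity to ordinary starlikeness through Lemma~\ref{LM_starlike-h-convex}, to argue by contradiction, and to feed the contradiction into the Euclidean criterion of Lemma~\ref{LM_starlike}. All of the hypotheses will be used only to establish one local statement at interior boundary points, which I will call \emph{one-sidedness}: \emph{for every $\zeta\in\UD\cap\partial\Omega$ and every hyperbolic geodesic $g$ through~$\zeta$, there is an $\varepsilon>0$ such that $g\cap\clD(\Omega)\cap\UD(\zeta,\varepsilon)$ is contained in one of the two closed sub-arcs into which $\zeta$ splits~$g$.} Granting this, both h-convexity and the strictness assertion are short.

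First I would record two elementary facts. Since $\Omega$ is a connected component of the open set $\{u>0\}$, continuity of $u$ gives $u(\zeta)=0$ for every $\zeta\in\UD\cap\partial\Omega$, and $\clD(\Omega)\subset\{u\ge0\}$; in particular, any point where $u<0$ lies outside $\clD(\Omega)$. To prove one-sidedness I fix $\zeta$ and a geodesic $g$ through it and distinguish two cases according to the tangent direction of $g$ at~$\zeta$. If this direction is \emph{not} orthogonal to $\nabla u(\zeta)$, then by hypothesis~\ref{IT_LM_main-grad-nonnullo} and the differentiability of $u$ at $\zeta$ the directional derivative of $u$ along $g$ at $\zeta$ is nonzero, so $u<0$ on one of the two sub-arcs near~$\zeta$; that sub-arc therefore meets $\clD(\Omega)$ only at~$\zeta$, and one-sidedness holds. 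If the tangent direction \emph{is} orthogonal to $\nabla u(\zeta)$, then, since a hyperbolic geodesic is uniquely determined by a point on it together with its tangent there, $g$ must coincide with $\gamma_\zeta$, and one-sidedness is precisely the content of hypothesis~\ref{IT_LM_main-geodesic}.

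With one-sidedness in hand I would prove h-convexity by contradiction. If $\Omega$ were not h-convex, Lemma~\ref{LM_starlike-h-convex} would furnish $T\in\Aut(\UD)$ with $0\in T(\Omega)$ for which $D:=T(\Omega)$ is not starlike. Lemma~\ref{LM_starlike} then yields $a_0\in\UD\cap\partial D$ and $\delta>0$ with $[(1-\delta)a_0,(1+\delta)a_0]\subset\clD(D)$. This Euclidean segment lies on the diameter through $a_0$, hence on a hyperbolic geodesic, and contains $a_0$ in its interior. Since $T^{-1}$ is an automorphism it maps geodesics to geodesics and satisfies $T^{-1}\big(\clD(D)\big)=\clD(\Omega)$; thus the image of that segment is a hyperbolic geodesic arc lying in $\clD(\Omega)$, containing $\zeta:=T^{-1}(a_0)\in\UD\cap\partial\Omega$ in its interior, and therefore meeting $\clD(\Omega)$ on \emph{both} sub-arcs at $\zeta$. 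This contradicts one-sidedness, so $\Omega$ is h-convex.

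Finally I would upgrade this to strict h-convexity, again by contradiction and one-sidedness. Suppose some geodesic $g$ meets $\partial\Omega$ in at least three points; label one that lies between two others $p_2$, and the outer two $p_1,p_3\in\UD\cap\partial\Omega\subset\clD(\Omega)$. Approximating $p_1,p_3$ by points of $\Omega$ and using h-convexity together with the continuous dependence of geodesic segments on their endpoints gives $\hsegment{p_1,p_3}\subset\clD(\Omega)$. This segment is the sub-arc of $g$ between $p_1$ and $p_3$; it passes through $p_2$ and places points of $\clD(\Omega)$ on both sides of $p_2$, contradicting one-sidedness at~$p_2$. Hence every geodesic meets $\partial\Omega$ at most twice, and $\Omega$ is strictly h-convex. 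The step I expect to be most delicate is the verification of one-sidedness, specifically the transversal case: it is where the Euclidean differentiability of $u$ with $\nabla u(\zeta)\neq0$ must be matched to the tangent direction of a hyperbolic geodesic, and where the uniqueness of a geodesic with prescribed tangent collapses the whole problem onto the single supporting geodesic $\gamma_\zeta$ governed by hypothesis~\ref{IT_LM_main-geodesic}.
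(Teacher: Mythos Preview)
Your proof is correct and follows essentially the same approach as the paper: both reduce to starlikeness via Lemma~\ref{LM_starlike-h-convex}, obtain the short radial segment in $\clD(D)$ from Lemma~\ref{LM_starlike}, and derive a contradiction by the same dichotomy---either the geodesic through $\zeta$ coincides with $\gamma_\zeta$ (handled by hypothesis~\ref{IT_LM_main-geodesic}) or it is transversal to $\nabla u(\zeta)$ (so $u<0$ on one side, contradicting $\clD(\Omega)\subset\{u\ge0\}$). Your packaging of this dichotomy as a separate ``one-sidedness'' lemma is a clean abstraction, and your argument for strict h-convexity via the middle point~$p_2$ is slightly more explicit than the paper's one-line observation that failure of strict h-convexity forces a geodesic segment into $\partial\Omega$, but the substance is the same.
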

\begin{proof}
Clearly, $\Omega$ is a domain since it is a connected component of an open set. To show that~$\Omega$ is h-convex, suppose on the contrary that it is not. Then by Lemma~\ref{LM_starlike-h-convex}, there exists $T\in\Aut(\UD)$ such that the domain $D:={T\big(\Omega\big)}$ contains the origin but it is not starlike. By Lemma~\ref{LM_starlike}, it follows that $\partial D \cap \UD$ contains a point~$a_0$ such that
\begin{equation}\label{EQ_in-the-closure}
 \big[(1-\delta)a_0,(1+\delta)a_0\big]\,\subs\,\clD(D)
\end{equation}
for a sufficiently small $\delta>0$.

The preimage of $\big[(1-\delta)a_0,(1+\delta)a_0\big]$ with respect to $T$ is a segment of the hyperbolic geodesic~$\gamma$ passing through $z:={T^{-1}(0)\in\Omega}$ and $\zeta:={T^{-1}(a_0)\in\partial\Omega}$. According to condition~\ref{IT_LM_main-geodesic}, there exists a punctured neighbourhood~$U$ of the point~$\zeta$ such that the hyperbolic geodesic $\gamma_\zeta$  passing through~$\zeta$  orthogonally to $\nabla u(\zeta)$ does not have  common points with~$\clD(\Omega)\cap U$. Taking into account~\eqref{EQ_in-the-closure}, it follows that ${\gamma\neq\gamma_\zeta}$. Being two distinct hyperbolic geodesics, $\gamma$~and $\gamma_\zeta$ intersect at~$\zeta$ transversally. Taking into account that ${u(\zeta)=0}$ and ${\nabla u(\zeta)\neq0}$, the latter implies that in every neighbourhood of~$\zeta$ there are points of~$\gamma$ at which ${u<0}$. This, however, contradicts~\eqref{EQ_in-the-closure} because $u(w)\ge0$ for  every ${w\in \clD(\Omega)=T^{-1}(\clD(D)}$.

To complete the proof it remains to notice that if an h-convex domain is not strictly h-convex, then clearly its boundary contains a hyperbolic geodesic segment. However, for the domain~$\Omega$, this would contradict condition~\ref{IT_LM_main-geodesic}.
\end{proof}

\section{Level sets defined by the hyperbolic density}\label{S_lambda-family}
For ${\lambda>0}$ and a holomorphic self-map $f:\UD\to\UD$, we consider the lower level set
$$
\Omega_\lambda(f):=\Big\{z\in\UD\colon \frac{1-|f(z)|^2}{1-|z|^2}<\lambda\Big\}.
$$
We state the main result of this section as follows.
\begin{theorem}\label{TH_main1}
Let $\lambda\ge1$ and let $f:\UD\to\UD$ be holomorphic. In case ${\lambda=1}$, we also suppose that~${f(0)\neq0}$ and that ${f\not\in\Aut(\UD)}$.
Then the set $\Omega_\lambda(f)$ is a strictly hyperbolically convex domain containing the origin.
\end{theorem}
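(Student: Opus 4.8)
The plan is to apply the reduction lemma, Lemma~\ref{LM_main}, to the real-analytic function
\[
u(z):=|f(z)|^2-\lambda|z|^2+\lambda-1=(1-|z|^2)\big(\lambda-\nu_f(z)\big),
\]
for which $\{z\in\UD\colon u(z)>0\}=\Omega_\lambda(f)$. Under either set of hypotheses one has $\nu_f(0)=1-|f(0)|^2<\lambda$, so $u(0)>0$ and the origin lies in $\Omega_\lambda(f)$. Two things must then be shown: that $\Omega_\lambda(f)$ is connected (so that it is a single domain), and that it satisfies the local conditions~\ref{IT_LM_main-grad-nonnullo}--\ref{IT_LM_main-geodesic} of Lemma~\ref{LM_main} at every boundary point $\zeta\in\UD\cap\partial\Omega_\lambda(f)$; at such a point $u(\zeta)=0$, i.e. $1-|f(\zeta)|^2=\lambda(1-|\zeta|^2)$, equivalently $\nu_f(\zeta)=\lambda$.

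For connectedness I would prove the stronger statement that $\Omega_\lambda(f)$ is starlike with respect to the origin, which suffices since a starlike set is path-connected through $0$. Fix $\omega\in\UC$ and put $\phi(r):=-u(r\omega)=1-|f(r\omega)|^2-\lambda(1-r^2)$. At any radial boundary crossing $r$, where $\phi(r)=0$, the relation $1-|f(r\omega)|^2=\lambda(1-r^2)$ together with $\lambda\ge1$ forces $|f(r\omega)|\le r$, while the infinitesimal Schwarz\,--\,Pick inequality~\eqref{EQ_SchP_inf} gives $|f'(r\omega)|\le\nu_f(r\omega)=\lambda$. Hence
\[
\phi'(r)=2\lambda r-2\Re\big(\overline{f(r\omega)}\,f'(r\omega)\,\omega\big)\ge 2\lambda\big(r-|f(r\omega)|\big)\ge0,
\]
and equality throughout would require $|f'(r\omega)|=\nu_f(r\omega)$, hence $f\in\Aut(\UD)$, which is excluded (for $\lambda>1$ one already has $|f(r\omega)|<r$ strictly). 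Thus $\phi$ is strictly increasing across each of its zeros, so $\{r\in[0,1)\colon u(r\omega)>0\}$ is an interval containing $0$; as $\omega$ is arbitrary, $\Omega_\lambda(f)$ is starlike about the origin and therefore a single domain.

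It remains to verify the two local conditions; write the gradient as the complex number $\nabla u\cong 2\,\partial_{\bar z}u=2\big(f\overline{f'}-\lambda z\big)$. For condition~\ref{IT_LM_main-grad-nonnullo}, suppose $\nabla u(\zeta)=0$, i.e. $f(\zeta)\overline{f'(\zeta)}=\lambda\zeta$. Taking moduli and using $|f'(\zeta)|\le\nu_f(\zeta)=\lambda$ gives $|\zeta|\le|f(\zeta)|$, while $\nu_f(\zeta)=\lambda\ge1$ gives $|f(\zeta)|\le|\zeta|$ as above; hence $|f(\zeta)|=|\zeta|$, which forces $\lambda=1$, and then $|f'(\zeta)|=\nu_f(\zeta)$ so $f\in\Aut(\UD)$ (the subcase $\zeta=0$ being excluded by $f(0)\ne0$), contradicting the hypotheses. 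Thus $\nabla u(\zeta)\ne0$.

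The main obstacle is condition~\ref{IT_LM_main-geodesic}. I would first straighten the geodesic: choose $T\in\Aut(\UD)$ with $T(\zeta)=0$ and set $v:=u\circ T^{-1}$, so that $\gamma_\zeta$ becomes the Euclidean diameter through the origin orthogonal to $\nabla v(0)$. Its vanishing Euclidean acceleration reduces~\ref{IT_LM_main-geodesic} to the single inequality $v_{\tau\tau}(0)<0$, where $\tau$ is the unit tangent to that diameter; geometrically, this asserts that the level curve $\{\nu_f=\lambda\}$ bends away from $\Omega_\lambda(f)$ strictly faster than the tangent geodesic. Expanding $v_{\tau\tau}(0)=2v_{z\bar z}(0)-2|v_z(0)|^{-2}\Re\big(v_{\bar z}(0)^2v_{zz}(0)\big)$ and computing the Wirtinger derivatives of $v$ at $0$ from the automorphism identities for $T^{-1}$ (after normalizing $\zeta$ and $f(\zeta)$ to be nonnegative reals, which is legitimate since a domain rotation is harmless and a target rotation leaves $u=|f|^2-\lambda|z|^2+\lambda-1$ unchanged), a term carrying the a priori unbounded quantity $f''(\zeta)$ appears. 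The decisive step, and the place where $\lambda\ge1$ is used again, is to control it by the second-order Schwarz\,--\,Pick estimate applied to the normalized self-map
\[
h:=\frac{f\circ T^{-1}-f(\zeta)}{1-\overline{f(\zeta)}\,(f\circ T^{-1})},\qquad h(0)=0,\quad |h'(0)|=\frac{|f'(\zeta)|}{\lambda}\le1,
\]
namely $|h''(0)|\le 2\big(1-|h'(0)|^2\big)$. Substituting this bound and $\nu_f(\zeta)=\lambda$ reduces $v_{\tau\tau}(0)<0$ to an elementary inequality valid precisely because $\lambda\ge1$, with strictness supplied by $f\notin\Aut(\UD)$ when $\lambda=1$. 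With both conditions verified, Lemma~\ref{LM_main} yields that $\Omega_\lambda(f)$ is strictly h-convex, completing the proof.
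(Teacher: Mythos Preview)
Your proposal is correct and follows essentially the same route as the paper: the same defining function $u$, the reduction via Lemma~\ref{LM_main}, the same normalization and straightening of the geodesic at a boundary point, and control of the second-order term through the Schwarz\,--\,Pick estimate applied to the auxiliary self-map $h$ (your bound $|h''(0)|\le 2(1-|h'(0)|^2)$ is exactly the paper's $|c_1|\le 1-|c_0|^2$). The only differences are cosmetic: you supply a direct starlikeness argument where the paper cites \cite{AMP2019}, and the ``elementary inequality'' you defer at the end is carried out explicitly in the paper, where it collapses to $\Phi\le b^2-\zeta^2\le0$ with $b=|f(\zeta)|\le|\zeta|=\zeta$ precisely because $\lambda\ge1$.
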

Our assumptions in Theorem~\ref{TH_main1} are sharp.
If ${f(0)=0}$ then, by the Schwarz Lemma, ${\Omega_1(f)=\emptyset}$. If ${f(0)\neq0}$ then $0\in\Omega_1(f)$ and, as Solynin~\cite{SolyninPAMS} proved, $\Omega_1(f)$ is strictly h-convex unless $f$ is an automorphism of~$\UD$. The situation for automorphisms is described in the example below.

\begin{example}\label{EX_lambda-family}
Let ${f\in\Aut(\UD)}$ and suppose that ${f(0)\neq0}$. As a direct calculation shows, $\Omega_\lambda(f)={\big\{z\colon|z-z_0|^2 > (|z_0|^2-1)/\lambda\big\}}$, where ${z_0:=-1/\overline{f(0)}}$. Consequently, $\Omega_\lambda(f)$ is h-convex if and only if~$\lambda\ge1$. Moreover, for $\lambda>1$, $\Omega_\lambda(f)$ is strictly h-convex, but this is not the case for $\Omega_1(f)$. In fact, $\UD\cap\partial\Omega_1(f)$ is precisely a geodesic.
\end{example}

Note that in the above example, $\Omega_\lambda(f)$ is the whole disk~$\UD$ if ${\lambda\ge(1+|f(0)|)/(1-|f(0)|)}$. At the end of this section, we will precisely characterise in which cases ${\Omega_\lambda(f)=\UD}$; see Proposition~\ref{PR_wholeDisk}.

\begin{proof}[\proofof{Theorem~\ref{TH_main1}}]
Observe that ${\Omega_\lambda(f)}={\big\{z\in\UD\colon u(z)>0\big\}}$, where
$$
  u(z):=|f(z)|^2-\lambda|z|^2+\lambda-1.
$$
It immediately follows that~${\Omega_\lambda(f)}$ is an open set. Moreover, it is known \cite[p.\,415]{AMP2019} that under the hypothesis of Theorem~\ref{TH_main1}, the set $\Omega_\lambda(f)$ is starlike with respect to the origin, hence connected. Note also that the function $u$ is real analytic throughout~$\UD$. Therefore, in order to apply  Lemma~\ref{LM_main}, it is sufficient to check that for every point ${\zeta\in\UD\cap\partial\Omega_\lambda(f)}$ the conditions \ref{IT_LM_main-grad-nonnullo}~and~\ref{IT_LM_main-geodesic} hold.

\StepPm{\ref{IT_LM_main-grad-nonnullo}.} We compute
$$
\nabla u(z) = {2\,\frac{\partial u}{\partial\hspace{.05em}\overline{\hspace{-.05em}z}} = 2 \left[f(z) \overline{f'(z)} - \lambda z \right]},
$$
so that ${\nabla u(\zeta)=0}$ is equivalent to $f(\zeta) \overline{f'(\zeta)} = \lambda \zeta$.  Moreover, ${1-|f(\zeta)|^2}={\lambda(1-|\zeta|^2)}$ because ${\zeta\in \partial \Omega_\lambda(f)}$. In particular, since ${\lambda\ge1}$, we have that $|f(\zeta)|\le|\zeta|$  with equality if and only if~${\lambda=1}$. Therefore, if ${\nabla u(\zeta)}$ vanished then using the Schwarz\,--\,Pick inequality~\eqref{EQ_SchP_inf} we would get
$$
   \lambda|\zeta|=|f(\zeta)f'(\zeta)|\le|f(\zeta)|\,\frac{1-|f(\zeta)|^2}{1-|\zeta|^2}
   =\lambda|f(\zeta)|\le\lambda|\zeta|,
$$
where at least one of the inequalities  would be strict unless ${\lambda=1}$ and ${f\in\Aut(\UD)}$. However, the latter possibility is excluded by the hypotheses.

\StepPm{\ref{IT_LM_main-geodesic}.} This part is more involved. We have to show that  ${u(z)<0}$ for all $z\in\gamma_\zeta\setminus\{\zeta\}$  close to~$\zeta$, where $\gamma_\zeta$ stands for the geodesic passing through~$\zeta$ orthogonally to~$\nabla u(\zeta)$.

Recall that $0\in\Omega_\lambda(f)$. Taking this into account and replacing $f$ by an appropriate rotation $e^{is}f(e^{it}z)$ we may assume without loss of generality  that ${\zeta>0}$ and ${f(\zeta)\geq0}$.  To further simplify the setting, we apply an automorphism ${\psi\in\Aut(\UD)}$ that takes $\zeta$ to~$0$. Then the geodesic~$\gamma_\zeta$ is mapped onto a diameter,  \textit{i.e.}~${\psi(\gamma_\zeta)}={(-\kappa,\kappa)}$ for some ${\kappa\in\UC}$. The inverse automorphism ${\v=\psi^{-1}}$ is given by
$$
\v(z) = \frac{\zeta + z}{ 1+ \zeta z} = \zeta +(1-\zeta^2)(z-\zeta z^2) + O(z^3), \qquad z\to0.
$$
Let $g:=f\circ\v$ and write
$$
g(z) = b + \sum_{n=1}^\infty b_n z^n, \qquad \text{where} \quad b=f(\zeta)\in [0,1).
$$
Let $v(t) := u\big( \v( t \kappa) \big)$, for ${t\in(-1,1)}$. It is clear that $v(0)=0$ since ${\zeta\in \partial\Omega_\lambda(f)}$. Also, we have that $v'(0)=0$ because $\gamma_\zeta$ is orthogonal to~$\nabla u(\zeta)$ at the point~$\zeta$. Our objective has now been reduced to showing that ${v''(0)<0}$.

From the definition of $u$ we get $v(t) = |g(t\kappa)|^2-\lambda|\v(t\kappa)|^2+\lambda-1$ and find the asymptotic expansion
\begin{align*}
v(t)  =&\, |b+b_1t\kappa + b_2 t^2\kappa^2|^2 - \lambda|\zeta +(1-\zeta^2)(t\kappa-\zeta t^2\kappa^2)|^2 + \lambda - 1 + O(t^3) \\
 = & \, b^2 - \lambda\zeta^2 + \lambda -1 + 2\Re\Big\{ \big[ b b_1 - \lambda \zeta (1-\zeta^2) \big]\kappa \Big\}\, t   \\
& + \bigg( |b_1|^2 -\lambda(1-\zeta^2)^2 + 2\Re\Big\{ \big[ b b_2 + \lambda \zeta^2 (1-\zeta^2) \big]\kappa^2 \Big\} \bigg) t^2 + O(t^3), \quad t\to0.
\end{align*}
Note that $v(0)=0$ is equivalent to
\begin{equation} \label{v-zero-is-zero}
1-b^2 = \lambda(1-\zeta^2).
\end{equation}
Let $A := b b_1 - \lambda \zeta (1-\zeta^2) $. Then $v'(0)=0$ is equivalent to
\begin{equation} \label{v-deriv-zero-is-zero}
\Re (A \kappa) =0.
\end{equation}
Consider the holomorphic function $h:\UD\to\overline\UD$ given by
$$
   h(z) := \frac{b-g(z)}{z \big( 1-bg(z)\big)} = \sum_{n=0}^\infty c_n z^n,\quad z\in\UD.
$$
Note that  $h(\UD)\subs\UD$ if ${f\not\in\Aut(\UD)}$, and ${h\equiv\const\in\UC}$ if ${f\in\Aut(\UD)}$. Therefore,
\begin{equation}\label{EQ_c-ests}
 |c_0|\le 1  \qquad \text{and} \qquad |c_1|\le 1-|c_0|^2,
\end{equation}
 the latter by the Schwarz\,--\,Pick inequality~\eqref{EQ_SchP_inf}, while the equality $|c_0|=1$ occurs if and only if $f\in\Aut(\UD)$. Moreover, as we have already seen,
\begin{equation*}
 b=f(\zeta)\le\zeta,
\end{equation*}
with  the inequality being strict unless~$\lambda=1$.
 Since in case ${\lambda=1}$ we suppose that $f\not\in\Aut(\UD)$, it follows that
\begin{equation}\label{EQ_bc0-zeta}
  b|c_0|<\zeta.
\end{equation}

Elementary calculations show that
\begin{equation} \label{b12-c01}
b_1 = -(1-b^2)c_0 \qquad \text{and} \qquad b_2 = -(1-b^2)(c_1+b c_0^2).
\end{equation}
Substituting in the expression for $A$ and using \eqref{v-zero-is-zero} we obtain
\begin{equation} \label{A-simple}
A = -(1-b^2)(b c_0 + \zeta).
\end{equation}
Combined with~\eqref{EQ_bc0-zeta}, this equality  implies that {$A\neq0$}. In view of \eqref{v-deriv-zero-is-zero}, we may write ${A \kappa = i \rho}$ for some ${\rho\in \R\backslash\{0\}}$. Clearly, $\rho^2=|A|^2$. Hence, using~\eqref{A-simple} we get
\begin{equation}\label{EQ_kappa}
\kappa^2 = -\frac{\rho^2}{A^2} = -\frac{\overline{A}}{A} =-\frac{b\overline{c_0}+\zeta}{ bc_0+\zeta }.
\end{equation}
Using \eqref{v-zero-is-zero} and \eqref{b12-c01} we get that the second coefficient in the asymptotic expansion for $v$ is
\begin{equation}\label{EQ_for-v-two-primes}
\frac{v''(0)}{2} = \lambda(1-\zeta^2)^2 ( \lambda |c_0|^2 - 1 )  + 2\Re\big\{ \lambda(1-\zeta^2) ( \zeta^2 -b c_1 -b^2 c_0^2 ) \kappa^2 \big\} \le \lambda(1-\zeta^2)\Phi,
\end{equation}
where
$$
\Phi:=(1-\zeta^2) ( \lambda |c_0|^2 - 1 ) + 2\Re\big\{ (\zeta^2 - b^2 c_0^2) \kappa^2 \big\} + 2b|c_1|.
$$
In view of \eqref{EQ_kappa} we have that
$$
\Re\{ (\zeta^2-b^2c_0^2) \kappa^2\} = - \Re\{ (\zeta-bc_0) (b\overline{c_0}+\zeta)\} = b^2|c_0|^2-\zeta^2.
$$
Now, using \eqref{EQ_c-ests}, we obtain
\begin{align*}
\Phi & =  (1-\zeta^2) ( \lambda |c_0|^2 - 1 ) + 2b^2|c_0|^2 -2\zeta^2 + 2b|c_1| \\
     &\le (1-\zeta^2) ( \lambda |c_0|^2 - 1 ) + 2b^2|c_0|^2 -2\zeta^2 + 2b(1-|c_0|^2)\\
     & = (1-b)^2 |c_0|^2 + 2b -1 -\zeta^2 \\
     & \leq  (1-b)^2 + 2b -1 -\zeta^2 \\
     & = b^2 - \zeta^2,
\end{align*}
with the last inequality  being strict unless~$f\in\Aut(\UD)$. Recalling that ${\zeta>b}$ if~${\lambda>1}$, and ${\zeta=b}$ if~${\lambda=1}$, we conclude that~${\Phi<0}$. Thus also ${v''(0)<0}$ as desired.
\end{proof}

\begin{proposition}\label{PR_wholeDisk}
Let $\lambda>0$ and let $f:\UD\to\UD$ be holomorphic. Then ${\Omega_\lambda(f)=\UD}$ if and only if the following two conditions hold:
\ifdefined\Pavel\begin{alphlist}\else\begin{enumerate}\fi
\item\label{IT_Blaschke}
   $f$ is a finite Blaschke product, i.e.
   $$
      f(z)=e^{i\theta}\prod_{k=1}^{n}\frac{z-a_k}{1-\overline{a_k}z},\quad z\in\UD,
   $$
   for some $\theta\in\Real$, $n\in\Natural$, and some points $a_1,\ldots a_n\in\UD$.

\medskip
\item\label{IT_lambda} $\lambda\ge\sup\limits_{z\in\UD}|f'(z)|$ and $\lambda>1$.
\ifdefined\Pavel\end{alphlist}\else\end{enumerate}\fi
\end{proposition}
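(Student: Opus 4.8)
The plan is to reduce the whole statement to one analytic fact: that $\nu_f$ is subharmonic on $\UD$ for every holomorphic self-map $f$. I first record that $\Omega_\lambda(f)=\UD$ means exactly $\nu_f(z)<\lambda$ for all $z\in\UD$. To prove subharmonicity I would compute $\Delta\nu_f$ directly; writing $N:=1-|f|^2$ and $D:=1-|z|^2$, a Wirtinger calculation yields
\[
\tfrac14\,D^3\,\Delta\nu_f=-|f'|^2D^2-2\Re\!\big(z f'\overline f\big)D+N\big(1+|z|^2\big),
\]
and inserting the Schwarz--Pick bound $|f'|\le N/D$ from~\eqref{EQ_SchP_inf} into the first two terms collapses the right-hand side to at least $N(|f|-|z|)^2\ge0$. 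Hence $\Delta\nu_f\ge0$; moreover equality throughout forces $|f|\equiv|z|$, so $\nu_f$ is constant only when $f$ is a rotation, in which case $\nu_f\equiv1$. This computation, with the Schwarz--Pick bound inserted so as to produce the perfect square, is the step I expect to demand the most care.

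For necessity I would argue as follows. If $\Omega_\lambda(f)=\UD$ then $1-|f(z)|^2\le\lambda(1-|z|^2)$, so $|f(z)|\to1$ as $|z|\to1$; the classical characterization of finite Blaschke products (the zero-free quotient $f/B$ over the finitely many zeros has unimodular boundary modulus, hence is a constant by the maximum principle) then gives~\ref{IT_Blaschke}. Next, Schwarz--Pick gives $|f'(z)|\le\nu_f(z)<\lambda$, so $\lambda\ge\sup_{\UD}|f'|$. For the strict inequality $\lambda>1$ I would use that $\nu_f$ has boundary values $|f'|$ on $\UC$ (established below) together with the normalization $\frac1{2\pi}\int_{\UC}|f'|\,|d\zeta|=\deg f\ge1$; this yields $\lambda\ge\max_{\UC}|f'|\ge1$, and $\lambda=1$ would force $\max_{\UC}|f'|=1$, hence $\deg f=1$ with $|f'|\equiv1$ on $\UC$, i.e.\ $f$ a rotation, for which $\nu_f\equiv1\not<1$ --- a contradiction. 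This establishes~\ref{IT_lambda}.

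For sufficiency, assume~\ref{IT_Blaschke} and~\ref{IT_lambda}. Setting $b_a(z):=(z-a)/(1-\overline a z)$ and $B_k:=\prod_{j\le k}b_{a_j}$, the telescoping identity $1-|f|^2=\sum_k|B_{k-1}|^2(1-|b_{a_k}|^2)$ combined with $1-|b_a|^2=(1-|a|^2)(1-|z|^2)/|1-\overline a z|^2$ shows that $\nu_f$ extends real-analytically across $\UC$, with $\nu_f|_{\UC}=\sum_k(1-|a_k|^2)/|1-\overline{a_k}\zeta|^2=|f'|$. Since $|f'|$ is itself subharmonic, $\sup_{\UD}|f'|=\max_{\UC}|f'|=:M$. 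Now, if $f$ is not a rotation, then $\nu_f$ is a non-constant subharmonic function continuous on $\oD$, so the strong maximum principle gives $\nu_f(z)<\max_{\UC}\nu_f=M\le\lambda$ for every $z\in\UD$; and if $f$ is a rotation, then $\nu_f\equiv1<\lambda$ by~\ref{IT_lambda}. Either way $\nu_f<\lambda$ on all of $\UD$, i.e.\ $\Omega_\lambda(f)=\UD$.

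Besides the subharmonicity inequality, the other point requiring attention is the boundary identification $\nu_f|_{\UC}=|f'|$, which is what converts the interior maximum principle into the sharp bound $M=\sup_{\UD}|f'|$; the integral normalization $\frac1{2\pi}\int_{\UC}|f'|=\deg f$ and the Blaschke characterization used in the necessity part are then routine. One could alternatively read the boundary values off Remark~\ref{RM_JWC}, since every boundary point of a finite Blaschke product carries a finite angular derivative.
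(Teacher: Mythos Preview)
Your proof is correct. The necessity half is essentially the same as the paper's: both use Schwarz--Pick to bound $|f'|$, the boundary behaviour $|f(z)|\to1$ to identify~$f$ as a finite Blaschke product, and the length identity $\int_{\UC}|f'|=2\pi\deg f$ to rule out $\lambda=1$.

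The sufficiency half, however, is genuinely different. The paper argues by contradiction \emph{via} Theorem~\ref{TH_main1}: assuming $\Omega_\lambda(f)\neq\UD$, it takes a geodesic tangent to~$\partial\Omega_\lambda(f)$, uses the h-convexity just proved to trap an arc $\Gamma\subset\UC$ outside $\Omega_\lambda(f)$, deduces $|f'|\equiv\lambda$ on~$\Gamma$, and hence that $zf'/f$ is the constant~$\lambda$, forcing $f(z)=e^{i\theta}z^n$ with $\lambda=n$, which is then checked directly. Your route is independent of Theorem~\ref{TH_main1}: you establish the global inequality $\Delta\nu_f\ge0$ (your Wirtinger computation and the two Schwarz--Pick insertions are correct, and the right-hand side does collapse to $N(|f|-|z|)^2$), show $\nu_f$ extends continuously to~$\UC$ with boundary values $|f'|$ via the telescoping identity for Blaschke products, and then apply the strong maximum principle. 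What your approach buys is a self-contained proof of Proposition~\ref{PR_wholeDisk} that does not lean on the main convexity theorem, together with the standalone fact that $\nu_f$ is subharmonic for \emph{every} holomorphic self-map of~$\UD$; what the paper's approach buys is that, once Theorem~\ref{TH_main1} is in hand, no further computation is needed beyond the elementary identification of the radial boundary values of~$\nu_f$.
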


\begin{proof}
\StepG{Necessity}
Assume that ${\Omega_\lambda(f)=\UD}$, \textit{i.e.}
\begin{equation}\label{EQ_wholeDisk}
\nu_f(z):=\frac{1-|f(z)|^2}{1-|z|^2}<\lambda\quad\text{for all~}~z\in\UD.
\end{equation}
Then in view of the Schwarz\,--\,Pick Lemma, ${|f'|<\lambda}$ in~$\UD$. In particular, $f$ extends continuously to the closure of~$\UD$. Appealing again to~\eqref{EQ_wholeDisk}, we have that ${|f(\zeta)|=1}$ for every ${\zeta\in\UC}$. Thus, $f$ must be a finite Blaschke product by a theorem of Fatou; see \textit{e.g.} \cite[Theorem~3.5.2]{BProd-book}. To see that ${\lambda>1}$ in this case, notice that
$$
  2\pi n~=~\int_{0}^{2\pi}|f'(e^{it})|\,\di t~\le~2\pi\lambda,
$$
where $n$ is the degree of the Blaschke product~$f$. It follows that ${\lambda\ge n\ge 1}$. If ${\lambda=1}$, then we get that ${f\in\Aut(\UD)}$ and ${|f'|<1}$ in~$\UD$, which is impossible.

\StepG{Sufficiency}
Suppose conditions \ref{IT_Blaschke} and~\ref{IT_lambda} hold. To show that the domain ${\Omega_\lambda(f)}$ coincides with~$\UD$, assume on the contrary that ${\Omega_\lambda(f)\neq\UD}$. Recall that $\Omega_\lambda(f)$ is not empty because ${0\in\Omega_\lambda(f)}$ when ${\lambda>1}$. Take any geodesic $\gamma$ tangent to $\partial\Omega_{\lambda}(f)$. Since $\Omega_\lambda(f)$ is h-convex by Theorem~\ref{TH_main1}, we have that $\Omega_\lambda(f)$ is contained in one of the two connected components of~$\UD\setminus\gamma$. Let $U$ stand for the other connected component. Fix an open arc ${\Gamma\subs\UC}$  contained in~$\partial U$. Since $f$ maps $\UC$ into~$\UC$, it is easy to see (using, \textit{e.g.}, Remark~\ref{RM_JWC}) that ${\zeta f'(\zeta)/f(\zeta)>0}$ and that $|f'(\zeta)|=\lim_{r\to1^-}\nu_f(r\zeta)$ for all ${\zeta\in\UC}$. Therefore, $|f'(\zeta)|\ge\lambda$ for all ${\zeta\in\Gamma}$. At the same time, ${|f'(\zeta)|\le\lambda}$ for all ${\zeta\in\UC}$ by condition~\ref{IT_lambda}. As a result, ${\zeta f'(\zeta)/f(\zeta)=\lambda}$ for all~${\zeta\in\Gamma}$. Thus, the rational
 function $z\mapsto zf'(z)/f(z)$ is constant and equal to~$\lambda$, which is only possible if ${f(z)=e^{i\theta}z^n}$ and ${\lambda=n}$ for some ${n\in\Natural}$ and some ${\theta\in\Real}$. Recalling that ${\lambda>1}$ by the hypothesis, it is easy to see that in this case~\eqref{EQ_wholeDisk} holds, contradicting  our assumption.
\end{proof}

\begin{remark}
Note that if at every point~$\zeta$ of some open arc ${\Gamma\subset\UC}$ a holomorphic self-map $f:\UD\to\UD$ has radial limit $\lim_{r\to1^-} f(r\zeta)$ belonging to~$\UC$ and attained uniformly with respect to~${\zeta\in\Gamma}$, then $f$~extends holomorphically to~$\Gamma$ by the Schwarz Reflection Principle. Combining this fact with  the Julia\,--\,Wolff\,--\,Carath\'eodory Theorem (see Remark~\ref{RM_JWC}), it is possible to establish a sort of local version of the above Proposition~\ref{PR_wholeDisk}: \textit{an open arc $\Gamma\subset\UC$ is contained in the boundary of~$\Omega_\lambda(f)$, ${\lambda>1}$, if and only if the function~$f$ admits a holomorphic extension to~$\Gamma$ satisfying $f(\zeta)\in\UC$ and $|f'(\zeta)|\le\lambda$ for all~${\zeta\in\Gamma}$}.
Furthermore, since $\liminf_{z\to\zeta}\nu_f(z)=\exp\big[\liminf_{z\to\zeta}k_{\UD}(z,0)-k_{\UD}(f(z),0)\big]$, see \textit{e.g.} \cite[Propositions~2.1.15 and~2.1.21]{Abate2}, a similar statement holds for the level sets $\mathcal D_\mu(f)$ studied in the next section.
\end{remark}

\section{Level sets defined by the hyperbolic distance}\label{S_hyperbolic-distance-family}
In this section we consider the sublevel sets
$$
 \mathcal D_\mu(f) = \mathcal D_\mu(f;z_0,w_0):=\big\{z\in \UD\colon k_\UD\big(z,z_0\big)-k_\UD\big(f(z),w_0\big)<\mu\big\},\quad \mu\in\Real,
$$
where $f:\UD\to\UD$ is  a holomorphic function and $z_0$, $w_0$ are points in~$\UD$.  Setting $g:={T_2^{-1}\circ f\circ T_1}$, for ${T_1,T_2\in\Aut(\UD)}$ that map the origin to $z_0$ and $w_0$, respectively, we see that $\mathcal D_\mu(f;z_0,w_0) =  T_1 \big(\mathcal D_\mu(g;0,0) \big).$ Recalling that automorphisms  preserve h-convexity,  in what follows we assume that
$$
z_0=w_0=0.
$$
Moreover, taking into account that  $\mathcal D_0(f)=\Omega_1(f)$ for this choice of $z_0$ and~$w_0$, we may exclude $\mu=0$ from consideration.

Our objective is to prove the following.

\begin{theorem}\label{TH_main2}
If $\mu<0$ and ${f:\UD\to\UD}$ is holomorphic then the set $\mathcal D_\mu(f)$ is either empty or a strictly hyperbolically convex domain.
\end{theorem}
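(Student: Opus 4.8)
The plan is to apply Lemma~\ref{LM_main} to the continuous function
\[
u(z):=\mu+k_\UD\big(f(z),0\big)-k_\UD(z,0),\qquad \mathcal D_\mu(f)=\{z\in\UD\colon u(z)>0\},
\]
after recording two structural facts about $\Psi:=k_\UD(\cdot,0)-k_\UD(f(\cdot),0)$. First, by the triangle inequality and the Schwarz--Pick Lemma,
$k_\UD(f(z),0)\le k_\UD(f(z),f(0))+k_\UD(f(0),0)\le k_\UD(z,0)+k_\UD(f(0),0)$,
so $\Psi(z)\ge-k_\UD(f(0),0)=\Psi(0)$ for every $z$, with strict inequality for $z\ne0$ unless $f\in\Aut(\UD)$. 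Thus the origin is the global minimum of $\Psi$. Consequently, if $\mathcal D_\mu(f)\ne\emptyset$ then $\Psi(0)<\mu$, i.e.\ $0\in\mathcal D_\mu(f)$ is an \emph{interior} point (equivalently, $\mathcal D_\mu(f)\ne\emptyset$ iff $k_\UD(f(0),0)>|\mu|$); in particular the origin is never a boundary point. Hence at each $\zeta\in\UD\cap\partial\mathcal D_\mu(f)$ we have $\zeta\ne0$, and since $k_\UD(f(\zeta),0)=k_\UD(\zeta,0)+|\mu|>0$ also $f(\zeta)\ne0$. Therefore $u$ is real-analytic in a neighbourhood of every boundary point, and the two non-smooth loci of $u$ (the origin and the zeros of $f$) cause no trouble.

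For connectivity, so that $\mathcal D_\mu(f)$ is a \emph{domain}, I would use that a zero of $\nabla\Psi$ in $\UD\setminus\{0\}$ forces equality in~\eqref{EQ_SchP_inf}, hence $f\in\Aut(\UD)$; and even then $\{\nabla\Psi=0\}$ is contained in the minimal level set $\{\Psi=-k_\UD(f(0),0)\}$, which lies strictly inside $\mathcal D_\mu(f)$ and never meets $\partial\mathcal D_\mu(f)$. Thus $\Psi$ has no critical points at levels $\ge\mu$, so a negative-gradient-flow (equivalently mountain-pass) argument excludes a separating saddle and shows that the nonempty sublevel set $\mathcal D_\mu(f)$ is connected. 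This step plays the role of the ``starlike, hence connected'' input cited from~\cite{AMP2019} in the proof of Theorem~\ref{TH_main1}.

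It remains to verify the hypotheses of Lemma~\ref{LM_main} at a boundary point $\zeta$. For condition~\ref{IT_LM_main-grad-nonnullo} a direct computation gives
\[
\tfrac12\,\nabla u(\zeta)=\frac{f(\zeta)\,\overline{f'(\zeta)}}{|f(\zeta)|\,\big(1-|f(\zeta)|^2\big)}-\frac{\zeta}{|\zeta|\,\big(1-|\zeta|^2\big)},
\]
and $\nabla u(\zeta)=0$ would force (taking moduli) $|f'(\zeta)|=\nu_f(\zeta)$, i.e.\ equality in~\eqref{EQ_SchP_inf}; by the previous paragraph this cannot occur on $\partial\mathcal D_\mu(f)$, so~\ref{IT_LM_main-grad-nonnullo} always holds. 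Condition~\ref{IT_LM_main-geodesic} I would treat exactly as in Theorem~\ref{TH_main1}: after a rotation take $\zeta>0$ and $b:=f(\zeta)\ge0$; apply $\varphi(z)=(\zeta+z)/(1+\zeta z)$ and set $g:=f\circ\varphi=b+\sum_{n\ge1}b_nz^n$, so that $\gamma_\zeta$ becomes the diameter $\varphi\big((-\kappa,\kappa)\big)$, $\kappa\in\UC$. With $v(t):=u\big(\varphi(t\kappa)\big)$ one has $v(0)=0$ and $v'(0)=0$, and the geodesic condition reduces to $v''(0)<0$. Using the identity $k_\UD\big(\varphi(w),0\big)=k_\UD(w,-\zeta)$, both distance terms are smooth near $w=0$ (as $b\ne0$); I would expand them to second order, introduce the Schwarz--Pick-normalized $h(z)=\big(b-g(z)\big)/\big(z(1-bg(z))\big)=\sum c_nz^n$ with $|c_0|\le1$, $|c_1|\le1-|c_0|^2$, express $b_1,b_2$ through $c_0,c_1$, eliminate $\kappa^2$ via $v'(0)=0$, and substitute.

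The main obstacle is this last reduction of $v''(0)$ to a manifestly negative quantity, mirroring the estimate chain culminating in~\eqref{EQ_for-v-two-primes}: the outcome should be an inequality in $c_0,c_1,\zeta,b,\mu$ that is driven below zero using $|c_0|\le1$ and $|c_1|\le1-|c_0|^2$, with strictness from $f\notin\Aut(\UD)$ (and, for automorphisms, from the boundary being a hypercycle rather than a geodesic, so that no exclusion of automorphisms is needed here). Since the distances enter through $\log\frac{1+|\cdot|}{1-|\cdot|}$ rather than through $|\cdot|^2$, the algebra is heavier than in Theorem~\ref{TH_main1}; to keep the expansion rational I might instead apply Lemma~\ref{LM_main} to the equivalent weight $\tilde u(z)=\cosh k_\UD(f(z),0)-\cosh\big(k_\UD(z,0)+|\mu|\big)$, which has the same zero set $\partial\mathcal D_\mu(f)$ and, at each boundary point, gradient $\nabla\tilde u=\sinh\big(k_\UD(f,0)\big)\,\nabla u$ positively proportional to $\nabla u$, so that the same orthogonal geodesic $\gamma_\zeta$ occurs and verifying $\tilde v''(0)<0$ suffices. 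Once conditions~\ref{IT_LM_main-grad-nonnullo} and~\ref{IT_LM_main-geodesic} are established, Lemma~\ref{LM_main} yields that $\mathcal D_\mu(f)$ is strictly h-convex; the strictness is already encoded in $v''(0)<0$, which forbids a geodesic arc in the boundary.
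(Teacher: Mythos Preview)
Your strategy coincides with the paper's --- apply Lemma~\ref{LM_main} and verify~\ref{IT_LM_main-grad-nonnullo},~\ref{IT_LM_main-geodesic} via the Schwarz--Pick parametrization $h=\sum c_nz^n$ --- and your opening paragraph correctly reproduces Proposition~\ref{LM_nonempty}. But the connectivity step is not sound as written. For $f\in\Aut(\UD)$ your claim that $\{\nabla\Psi=0\}$ lies in the minimal level set is false: with $p:=f^{-1}(0)$ one has $\Psi(z)=k_\UD(z,0)-k_\UD(z,p)$, whose smooth critical locus is the full geodesic through $0$ and $p$ minus $\hsegment{0,p}$, and the ray beyond $p$ sits at the \emph{maximum} level $+k_\UD(f(0),0)>\mu$. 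More seriously, invoking mountain-pass or gradient flow on the non-compact disk, with cone singularities at $0$ and at every zero of $f$, needs justification you do not supply. The paper sidesteps all of this by switching to the algebraic weight $u(z)=(1+a|z|)\,|f(z)|-|z|-a$, $a=-\tanh(\mu/2)$ (obtained from the $\tanh$ addition formula): Lemma~\ref{LM_gradient} then gives $\Re\{\bar\zeta\,\nabla u(\zeta)\}<0$ at every boundary point by a single Schwarz--Pick estimate, which simultaneously delivers~\ref{IT_LM_main-grad-nonnullo} and starlikeness with respect to the origin, hence connectedness.

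The decisive gap is that you never carry out the estimate $v''(0)<0$; you only announce the plan and concede that with your logarithmic or $\cosh$ weight the algebra is heavier. This is the heart of the proof, and the algebraic $u$ above is again the missing idea that makes it feasible. With that choice $v''(0)$ is rational in $\zeta,b,c_0,c_1$; writing $c_0+1=re^{i\theta}$ (so that $\kappa=-ie^{-i\theta}$ from $v'(0)=0$) and using $|c_1|\le 1-|c_0|^2$, everything collapses to the negativity on $0\le r\le 2\cos\theta$ of the convex quadratic $p(r)=(\alpha-1)r(r-2\cos\theta)+\beta\cos^2\theta$, where $\alpha=(1+b^2)/(2b)$ and $\beta=\alpha-(1+\zeta^2)/(2\zeta)$. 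Since $p(0)=p(2\cos\theta)=\beta\cos^2\theta$ and $\beta<0$ precisely because $\mu<0$ forces $0<\zeta<b$, the proof closes --- and this strict inequality is exactly what gives strict h-convexity even for automorphisms, confirming your expectation that no exclusion of $\Aut(\UD)$ is needed. Your $\tilde u$ could in principle be pushed through, but only after clearing denominators into essentially the same computation; the choice of weight is the substantive point you are missing.
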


This is sharp since for $\mu>0$ the following example shows that h-convexity may fail.

\begin{example}\label{EX_mu>0}
Let $\mu>0$ and set $b:=\tanh(\mu/2)\in(0,1)$. Consider the disk-automorphism ${f(z):=(z-b)/(1-bz)}$, ${z\in\UD}$. Note that $z\in\mathcal D_\mu(f)$ if and only if ${b-|z|+(1-b|z|)|f(z)|>0}$. Since $|f(z)|\ge f(|z|)$ for all ${z\in\UD}$, with equality if and only if~$z\in[b,1)$, it is easy to see that in this case, $\mathcal D_\mu(f)=\UD\setminus[b,1)$, which is not a hyperbolically convex domain.
\end{example}

 Observe that we may rewrite the defining inequality of the set $\mathcal D_\mu(f)$ as
\begin{align}\label{EQ_euclidian_Omega}
   \mathcal D_\mu(f)\,&=\,\Big\{ z\in\D\colon \frac{|z|-|f(z)|}{1-|zf(z)|} < \tanh(\mu/2) \Big\}\,=\,
   \big\{z\in\D\colon u(z)>0\big\},\\
\intertext{where}\label{EQ_def-of-u}
   u(z)&:= (1+a|z|) |f(z)| - |z| - a \qquad\text{and}\qquad a:= -\tanh(\mu/2).
\end{align}

We begin with some basic considerations in the following proposition.
\begin{proposition}\label{LM_nonempty}
Let $\mu<0$ and $f$ be a holomorphic self-map of $\UD$. Then ${\mathcal D_\mu(f)\neq\emptyset}$ if and only if ${|f(0)|> -\tanh(\mu/2)}$, in which case $\mathcal D_\mu(f)$ is a proper subset of $\UD$ containing the origin.
\end{proposition}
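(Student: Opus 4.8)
The plan is to argue entirely from the reformulation \eqref{EQ_euclidian_Omega}--\eqref{EQ_def-of-u}, so that $\mathcal D_\mu(f)=\{z\in\UD\colon u(z)>0\}$ with $u(z)=(1+a|z|)|f(z)|-|z|-a$ and $a=-\tanh(\mu/2)$. Since $\mu<0$, we have $a\in(0,1)$, and the whole equivalence should reduce to inspecting the single value $u(0)=|f(0)|-a$. Indeed, if $|f(0)|>-\tanh(\mu/2)=a$, then $u(0)>0$, so $0\in\mathcal D_\mu(f)$; this settles one implication and simultaneously shows that in this case $\mathcal D_\mu(f)$ is nonempty and contains the origin.

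For the reverse implication I would assume $b_0:=|f(0)|\le a$ and prove that $u(z)\le0$ throughout $\UD$, which forces $\mathcal D_\mu(f)=\emptyset$. The main tool is the Schwarz\,--\,Pick Lemma in the form of a modulus estimate: since $\rho_\UD(f(z),f(0))\le\rho_\UD(z,0)=|z|$, the point $f(z)$ lies in the pseudohyperbolic disk of radius $|z|$ centered at $f(0)$, and bounding the modulus over this disk gives the standard extremal estimate $|f(z)|\le(|z|+b_0)/(1+b_0|z|)$. Substituting this bound (with $r=|z|$) into $u$ and clearing the positive denominator $1+b_0 r$, a short computation collapses the numerator to a single product:
$$
u(z)\;\le\;(1+ar)\frac{r+b_0}{1+b_0 r}-r-a\;=\;\frac{(b_0-a)(1-r^2)}{1+b_0 r}.
$$
Because $r\in[0,1)$ and $b_0\le a$, the right-hand side is $\le0$; hence $u\le0$ on $\UD$ and $\mathcal D_\mu(f)=\emptyset$. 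This completes the equivalence.

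It then remains to check that, when $\mathcal D_\mu(f)$ is nonempty, it is a proper subset of $\UD$. Since $\mu<0$, the defining inequality $k_\UD(z,0)-k_\UD(f(z),0)<\mu<0$ trivially implies $\mathcal D_\mu(f)\subset\mathcal D_0(f)=\Omega_1(f)$. By Proposition~\ref{PR_wholeDisk}, the equality $\Omega_\lambda(f)=\UD$ can hold only when $\lambda>1$, so in particular $\Omega_1(f)\neq\UD$. Therefore $\mathcal D_\mu(f)\subset\Omega_1(f)\subsetneq\UD$, which makes $\mathcal D_\mu(f)$ proper; combined with $0\in\mathcal D_\mu(f)$ established above, this is exactly the asserted conclusion.

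The only genuinely delicate point is the sharp extremal bound on $|f(z)|$ together with the ensuing algebraic cancellation: everything hinges on the fact that the resulting upper estimate for $u$ factors as $(b_0-a)(1-r^2)/(1+b_0 r)$, whose sign is governed entirely by $b_0-a$. Once this factorization is in hand, the emptiness/non-emptiness dichotomy is immediate, and the properness claim reduces to the already-established non-degeneracy $\Omega_1(f)\neq\UD$.
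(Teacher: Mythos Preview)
Your proof is correct and follows the paper's approach almost verbatim: the easy direction and the properness argument via $\mathcal D_\mu(f)\subset\Omega_1(f)$ together with Proposition~\ref{PR_wholeDisk} are identical. The only cosmetic difference is in the emptiness direction when $|f(0)|\le a$: the paper stays in hyperbolic terms and applies the triangle inequality $k_\UD(f(z),0)\le k_\UD(f(z),f(0))+k_\UD(f(0),0)\le k_\UD(z,0)-\mu$ in one line, whereas you translate the same inequality into euclidean coordinates via the modulus bound $|f(z)|\le(|z|+b_0)/(1+b_0|z|)$ and the factorization $(b_0-a)(1-r^2)/(1+b_0r)$---these are the same estimate in different notation.
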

\begin{proof}
If $|f(0)|>a= -\tanh(\mu/2)$, then by~\eqref{EQ_euclidian_Omega} it is immediate that ${0\in\mathcal D_\mu(f)}$.
The  reverse inequality ${|f(0)|\le -\tanh(\mu/2)}$  is equivalent to ${k_\UD(f(0),0)\le-\mu}$. In such a case, applying the triangle inequality and the Schwarz\,--\,Pick Lemma  \cite[Theorem~3.2]{HBmetric1}, for all ${z\in\UD}$ we obtain
$$
 k_\UD(f(z),0)\,\le\, k_\UD(f(z),f(0))+k_\UD(f(0),0)\le k_\UD(z,0) -\mu,
$$
which shows that $\mathcal D_\mu(f)=\emptyset$.

To see that $\mathcal D_\mu(f)$ is not the whole unit disk when $|f(0)|>a$, it remains to recall that ${\mathcal D_\mu(f)\subset\mathcal D_0(f)=\Omega_1(f)}$ and to use Proposition~\ref{PR_wholeDisk}.
\end{proof}

In the proof of Theorem~\ref{TH_main2} we will use the following lemma.
\begin{lemma}\label{LM_gradient}
Let $\mu<0$ and $f:\UD\to\UD$  be holomorphic with ${|f(0)|> -\tanh(\mu/2)}$.  Then for every ${\zeta\in\partial\mathcal D_\mu(f) \cap \UD}$ we have ${|f(\zeta)|>|\zeta|>0}$ and
\begin{equation} \label{EQ_grad-projection}
 \Re\big\{\overline \zeta\,\nabla u(\zeta)\big\} < 0,
\end{equation}
where $u$ is defined by~\eqref{EQ_def-of-u}.

In particular, $\mathcal D_\mu(f)$ is a domain that is starlike with respect to the origin.
\end{lemma}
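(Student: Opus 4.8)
The plan is to verify the two displayed assertions pointwise at each $\zeta\in\partial\mathcal D_\mu(f)\cap\UD$ and then to deduce starlikeness from the gradient inequality by a one-dimensional monotonicity argument along rays emanating from the origin. Throughout I write $a=-\tanh(\mu/2)\in(0,1)$, so that $\mathcal D_\mu(f)=\{u>0\}$ with $u(z)=(1+a|z|)|f(z)|-|z|-a$.

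First I would record the boundary relation. Since $\{u>0\}$ is open and contains the origin (Proposition~\ref{LM_nonempty}), no boundary point equals $0$, so $|\zeta|>0$. At $\zeta$ we have $u(\zeta)=0$, i.e. $(1+a|\zeta|)|f(\zeta)|=|\zeta|+a$, whence
$$|f(\zeta)|=\frac{|\zeta|+a}{1+a|\zeta|},\qquad |f(\zeta)|-|\zeta|=\frac{a(1-|\zeta|^2)}{1+a|\zeta|}>0,$$
which gives $|f(\zeta)|>|\zeta|>0$. In particular $f(\zeta)\neq0$ and $\zeta\neq0$, so $|z|$, $|f(z)|$ and hence $u$ are real-analytic near $\zeta$; this is exactly what makes $\nabla u(\zeta)$ meaningful. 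Next I would compute the complex gradient by Wirtinger calculus,
$$\nabla u(z)=2\frac{\partial u}{\partial\overline z}=a\frac{z}{|z|}|f(z)|+(1+a|z|)\frac{f(z)\overline{f'(z)}}{|f(z)|}-\frac{z}{|z|},$$
so that, with $r=|\zeta|$ and $s=|f(\zeta)|$,
$$\Re\big\{\overline\zeta\,\nabla u(\zeta)\big\}=a r s+(1+ar)\frac{\Re\{\overline{f(\zeta)}f'(\zeta)\,\zeta\}}{s}-r.$$
Estimating $\Re\{\overline{f(\zeta)}f'(\zeta)\zeta\}\le s\,|f'(\zeta)|\,r$ and then $|f'(\zeta)|\le(1-s^2)/(1-r^2)$ by Schwarz--Pick~\eqref{EQ_SchP_inf}, and finally substituting $s=(r+a)/(1+ar)$, the bracketed quantity collapses to zero, because $(1+ar)(1-s^2)/(1-r^2)=(1-a^2)/(1+ar)$ and $as=(ar+a^2)/(1+ar)$, whose sum is $1$. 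Thus $\Re\{\overline\zeta\nabla u(\zeta)\}\le0$, and it remains only to upgrade this to a strict inequality.

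The strictness is the crux, and I expect it to be the main obstacle. Equality above forces equality in both estimates: the modulus bound $\Re\{\overline{f(\zeta)}f'(\zeta)\zeta\}=s\,|f'(\zeta)|\,r$, meaning $\overline{f(\zeta)}f'(\zeta)\zeta\ge0$, and the Schwarz--Pick equality, which holds only when $f\in\Aut(\UD)$. Hence if $f\notin\Aut(\UD)$ the inequality is automatically strict. The remaining case $f\in\Aut(\UD)$, say $f(z)=e^{i\theta}(z-\alpha)/(1-\overline\alpha z)$ with $\alpha=f^{-1}(0)$, $|\alpha|=|f(0)|>a$, is genuinely delicate, since there the weak inequality does become an equality --- but only on $\UC$. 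Here I would use the rotation-independent identity
$$\overline{f(z)}f'(z)=\frac{(\overline z-\overline\alpha)(1-|\alpha|^2)}{(1-\alpha\overline z)(1-\overline\alpha z)^2},$$
from which a direct calculation (writing $\beta=\overline\alpha\zeta$ and reducing to the reality of $(r^2-\beta)/(1-\beta)$) shows that $\overline{f(\zeta)}f'(\zeta)\zeta$ is real precisely when $\overline\alpha\zeta\in\R$, i.e. when $\zeta$ lies on the diameter through $\alpha$; and that on that diameter the sign condition $\overline{f(\zeta)}f'(\zeta)\zeta\ge0$ together with the boundary relation $(1+a|\zeta|)|f(\zeta)|=|\zeta|+a$ can be met only at $|\zeta|=1$. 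Since $\zeta\in\UD$ this is impossible, so equality cannot occur and $\Re\{\overline\zeta\nabla u(\zeta)\}<0$ at every interior boundary point.

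Finally I would deduce starlikeness. Fix $\omega\in\UC$ and set $\phi(t):=u(t\omega)$ on $[0,1)$; then $\phi(0)=u(0)>0$, and at any zero $t_0\in(0,1)$ of $\phi$ the point $\zeta=t_0\omega$ is an interior boundary point, whence $\phi'(t_0)=\tfrac1{t_0}\Re\{\overline\zeta\nabla u(\zeta)\}<0$. Therefore $\phi$ is strictly decreasing through each of its zeros: it can neither touch $0$ from below nor cross back to positive values, so $\{t\in[0,1):\phi(t)>0\}$ is an interval $[0,\tau(\omega))$. Thus every radial slice of $\mathcal D_\mu(f)=\{u>0\}$ through the origin is connected, i.e. the set is starlike with respect to $0$, and in particular connected, hence a domain.
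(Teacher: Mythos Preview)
Your proof is correct and follows essentially the same route as the paper: compute $\Re\{\overline\zeta\,\nabla u(\zeta)\}$, bound it above by $0$ via the Schwarz--Pick inequality, and rule out equality. The only notable difference is in the equality analysis for automorphisms: the paper observes that the equality conditions $\overline{\zeta f'(\zeta)}f(\zeta)>0$ and $|f(\zeta)|=(|\zeta|+a)/(1+a|\zeta|)$ pin $f$ down to a rotation of $f_0(z)=(z+z_0)/(1+\overline{z_0}z)$ with $z_0=a\,\zeta/|\zeta|$, whence $|f(0)|=a$ contradicts the hypothesis; you instead compute $\overline{f(\zeta)}f'(\zeta)\zeta$ for a generic automorphism and show the combined conditions force $|\zeta|=1$. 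Both arguments ultimately hinge on the hypothesis $|f(0)|>a$. One small point: in your starlikeness paragraph you assert that every zero $t_0\omega$ of $u$ is an interior boundary point of $\mathcal D_\mu(f)$; this is true a posteriori (the radial derivative is strictly negative there, so $u$ changes sign), but it is cleaner to note that your gradient computation in fact uses only $u(\zeta)=0$ and $\zeta\neq0$, not membership in $\partial\mathcal D_\mu(f)$, so it applies at every zero of $u$ in $\UD\setminus\{0\}$ directly.
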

\begin{proof}
Since $\mu<0$, we have that $a =-\tanh(\mu/2)\in(0,1)$. Let ${\zeta\in\partial\mathcal D_\mu(f)\cap \UD}$. Note that ${\zeta\neq0}$ by Proposition~\ref{LM_nonempty}. Using the fact that ${u(\zeta)=0}$, we obtain
\begin{equation}\label{EQ_abs_f_of_zeta}
 |f(\zeta)|=\frac{|\zeta|+a}{1+a|\zeta|}\,>\,|\zeta|.
\end{equation}
In particular, $f(\zeta)\neq0$.

Note that $u(z)$ is real-analytic  whenever $z\neq0$ and $f(z)\neq0$. Employing the formula ${\nabla u(z)=2\, {\partial u(z)}/{\partial\hspace{.05em}\overline{\hspace{-.05em}z}}}$, we compute
$$
 \Re\big\{\overline z\,\nabla u(z)\big\}\,=\,
    |z|(a|f(z)|-1)\,+\,(1+a|z|)\,\Re\Big\{\frac{f(z)}{|f(z)|}\overline{zf'(z)}\Big\}.
$$
Taking into account~\eqref{EQ_abs_f_of_zeta} and using~\eqref{EQ_SchP_inf}, we therefore obtain
\begin{align*}
  \Re\big\{\overline \zeta\,\nabla u(\zeta)\big\}
     & = \frac{|\zeta| (a^2-1 ) }{1+a|\zeta|} \,+\, (1 + a|\zeta|) \Re\Big\{ \frac{f(\zeta)}{|f(\zeta)|}\,\overline{\zeta f'(\zeta)}\Big\} \\
     & \le \frac{|\zeta| (a^2-1 ) }{1+a|\zeta|} \,+\,  (1 + a|\zeta|)|\zeta f'(\zeta)|\\
     & \le~0,
\end{align*}
with equalities occurring in both inequalities if and only if ${f\in\Aut(\UD)}$ and ${\overline{\zeta f'(\zeta)} f(\zeta)>0}$. The only automorphisms satisfying  this last inequality and~\eqref{EQ_abs_f_of_zeta} are rotations of
\begin{equation*}
 f_0(z):=\frac{z+z_0}{1+\overline{z_0}z},\quad z_0:=a\frac{\zeta}{|\zeta|}.
\end{equation*}
However, $|f_0(0)|=a$, which contradicts the hypothesis of the lemma. Thus, the desired strict inequality holds.

Recall that $0\in\mathcal D_\mu(f)$ by Proposition~\ref{LM_nonempty}. Viewing the quantity in~\eqref{EQ_grad-projection} as the scalar product of the vectors~$\zeta$  and $\nabla u(\zeta)$ we get that the angle between them lies in $(\pi/2, 3\pi/2)$.
 Hence with the help of a standard argument,  it follows that $\mathcal D_\mu(f)$ is a starlike domain.
\end{proof}

\begin{proof}[\proofof{Theorem~\ref{TH_main2}}]
Let ${\mu<0}$ and let $f$  be a holomorphic self-map of $\UD$ such that ${\mathcal D_\mu(f)\neq\emptyset}$. Fix an arbitrary ${\zeta\in\UD\cap\partial\mathcal D_\mu(f)}$.  Using rotations, we may assume without loss of generality that ${\zeta>0}$ and ${b:=f(\zeta)>0}$.

In view of~\eqref{EQ_euclidian_Omega}, Lemmas~\ref{LM_main} and~\ref{LM_gradient}, in order to show that $\mathcal D_\mu(f)$ is strictly h-convex, working exactly as in the proof of Theorem~\ref{TH_main1}, it suffices to prove the following claim.

\medskip\noindent\textsc{Claim.} \textit{
Let $v(t) := u\big( \v( t \kappa) \big)$, ${t\in(-1,1)}$, where $\v(z) := {(\zeta + z)/(1 + \zeta z)}$ and ${\kappa\in\UC}$ is chosen in such a way that ${v'(0)=0}$. Then $v''(0)<0$.}
\medskip

Following the proof of Theorem~\ref{TH_main1}, we set $g:=f\circ \v$ and express $v'(0)$ and $v''(0)$  in terms of $b$, $\zeta$ and the first two Taylor coefficients of
$$
   h(z) := \frac{b-g(z)}{z \big( 1-bg(z)\big)} = \sum_{n=0}^\infty c_n z^n,\quad z\in\UD,
$$
which satisfy
\begin{equation}\label{EQ_coefficients}
   |c_0|\le1 \quad\text{and}\quad  |c_1|\le 1-|c_0|^2.
\end{equation}
Furthermore, the relation $0=v(0) = (1+a\zeta)b -\zeta -a$ allows us to eliminate the parameter~$a$. With an asymptotic expansion for $t\to0$ we see that $\di [u \circ \varphi(t\tilde\kappa) ]/\di t\,\big|_{t=0}=\Re(A\tilde\kappa)$, for arbitrary $\tilde\kappa\in\UC$, where
\begin{equation}\label{EQ_A}
  A := - \frac{(1-\zeta^2)(1-b^2)(c_0 +1)}{1-b\zeta}.
\end{equation}
On the other hand, we get $\di [u \circ \varphi(t\tilde\kappa) ]/\di t\,\big|_{t=0}=\Re [ \overline{ \nabla u(\zeta)} \varphi'(0) \tilde\kappa ]$ by the chain rule. Since $\nabla u(\zeta)\neq0$ by Lemma~\ref{LM_gradient}, it follows that $A=\overline{ \nabla u(\zeta)} \varphi'(0)\neq 0$, thus $c_0\neq-1$.

Similarly, a laborious but elementary calculation leads to
\begin{multline}\label{EQ_second}
\frac{v''(0)}2~=~ \frac{(1-\zeta^2)(1-b^2)}{1-b\zeta} \Bigg[ \frac{ (1-b^2) \big(\Im(c_0\kappa)\big)^2 }{2b} - \Re\big\{ (c_1+bc_0^2) \kappa^2\big\}     \\
  - (b-\zeta) \Re(\kappa) \Re(c_0 \kappa) +\zeta  -\frac{(1+3\zeta^2)\big(\Im\kappa\big)^2 }{2\zeta}  \Bigg].
\end{multline}

Since $\Re (A \kappa)=v'(0)=0$, we may write $A\kappa = i \rho$ for a suitable ${\rho>0}$ (replacing, if necessary, $\kappa$ by~$-\kappa$). We set
$$
c_0 +1 =: r e^{i\theta}, \qquad \text{with} \qquad |\theta|<\frac{\pi}{2}, \quad 0<r\leq 2\cos\theta.
$$
Taking into account~\eqref{EQ_A}, in this notation we have that $\kappa = \frac{i \rho }{A} = -i \frac{ |c_0+1| }{ c_0+1 } = -i e^{-i\theta}$. With the help of the equalities
\begin{align*}
&\kappa\,=\,-\sin\theta\,+\,i(-\cos\theta),  \qquad  |c_0|^2 = r^2 -2r\cos\theta+1,\\
&c_0\kappa\,=\,\sin\theta\,+\,i(\cos\theta-r), \quad\text{and}\quad \Re(c_0^2\kappa^2) = 1- 2\cos^2\theta +2r\cos\theta -r^2,
\end{align*}
formula~\eqref{EQ_second} can be rewritten as
$$
\Phi := \, \frac{1-b\zeta}{(1-\zeta^2)(1-b^2)} \cdot \frac{v''(0)}{2}%
= \,  \alpha r^2 - 2 \alpha r \cos\theta  + \beta \cos^2\theta - \Re (c_1 \kappa^2 ),
$$
where $\displaystyle \alpha := \frac{1+b^2}{2b} \quad \text{and} \quad  \beta := \frac{1+b^2}{2b} - \frac{1+\zeta^2}{2\zeta}$. \\[.75ex]According to~\eqref{EQ_coefficients}, we have $-\Re (c_1 \kappa^2 ) \le |c_1|  \le 1- |c_0|^2 = 2 r \cos\theta -r^2$. Hence,
$$
 \Phi~\le~ p(r):=(\alpha-1) r^2-2(\alpha-1) r\cos\theta+\beta\cos^2\theta\,=\,
 (\alpha-1) r(r-2\cos\theta)+\beta\cos^2\theta.
$$
Since $\alpha-1=(1-b)^2/(2b)>0$,  the quadratic polynomial $p$ is convex with respect to the variable $r$. Hence, it suffices to verify that $p$~is negative at the endpoints ${r=0}$ and ${r=2\cos \theta}$.
We have $p(0)={p(2\cos\theta)}=\beta\cos^2\theta$, with
 ${\beta<0}$, since ${0<\zeta<b<1}$ in view of Lemma~\ref{LM_gradient}.
Thus, $v''(0)<0$, which proves the claim.
\end{proof}

\begin{remark}
Recall that the hyperbolic geometry can be transferred from the unit disk to any simply connected hyperbolic domain via any of its Riemann mappings, carrying with it the notion of h-convexity. Hence, Theorem~\ref{TH_main2} proved above can be rewritten as follows. \textit{If ${\mu<0}$ and $f:D_1\to D_2$ is a holomorphic map between two simply connected hyperbolic domains ${D_j\subs\C}$, $j=1,2$, then for every ${z_1\in D_1}$ and every ${z_2\in D_2}$, the set
$$
\mathcal D_\mu(f;D_1,D_2):=   \{z\in D_1\colon k_{D_1}\big(z,z_1\big)-k_{D_2}\big(f(z),z_2\big)<\mu\},
$$
where $k_{D_j}$ stands for the hyperbolic distance in~$D_j$, is either empty or a strictly h-convex domain in~$D_1$. For ${\mu=0}$ the same statement holds\footnote{As for the case $\mu=0$, recall that it is covered by \cite[Theorem~1]{SolyninPAMS}; see also Theorem~\ref{TH_main1} and Example~\ref{EX_lambda-family}.} unless $f$ maps $D_1$ conformally onto $D_2$, in which case $\mathcal D_\mu(f;D_1,D_2)$ is merely h-convex (not strictly).}
\end{remark}

\section{Concluding remarks and open questions}\label{S_further}
{Theorem~\ref{TH_main2}, asserting the h-convexity of the sets $\mathcal D_\mu(f)$, $\mu\le0$ (where the case ${\mu=0}$ is due to Solynin~\cite{SolyninPAMS}), is stated in terms of hyperbolic geometry. However, our proof is entirely in euclidian terms.
\begin{problem}
  Give an intrinsic proof, \textit{i.e.}~a proof purely in terms of hyperbolic geometry, of the h-convexity of $\mathcal D_\mu(f)$, $\mu\le0$.
\end{problem}
We expect that such a proof could reveal some new interesting relationship between the hyperbolic metric and holomorphic mappings. Moreover, having a hyperbolic-geometric proof one may hope that Theorem~\ref{TH_main2} can be extended to a more general setting.  To be more concrete, we need to recall some definitions.

Recall that a \textsl{geodesic} in a metric space ${(X,k)}$ is an isometry $\gamma:I\to X$ from an interval ${I\subset\Real}$, endowed with the euclidian distance ${k_\Real(s,t):=|t-s|}$, to ${(X,k)}$. As usual, the image~$\gamma(I)$ of such an isometry is also referred to as a geodesic. The metric space ${(X,k)}$ is said to be \textsl{geodesic} if every pair of distinct points in~$X$ can be joined by a geodesic. Note that, in general, such a geodesic does not have to be unique. Moreover, in addition to these geodesics, there can exist also \textsl{local geodesics} joining the same two points. Recall that ${\gamma:I\to X}$ is called a local geodesic\footnote{It is worth mentioning that in Riemannian geometry the terminology is a bit different: geodesics are called \textsl{minimizers} or \textsl{length minimizing geodesics}, and local geodesics are referred to simply as geodesics.} if every ${t_0\in I}$ is contained in a subinterval ${J\subs I}$ such that the restriction of~$\gamma$ to~$J$ is a geodesic.
 As a result, in a geodesic metric space there are four different conditions that can be considered as analogues of convexity in~$\R^n$:

\ifdefined\Pavel\begin{romlist}\else\begin{enumerate}\fi
  \item\label{IT1} for every $x_1,x_2\in A\subs X$, $x_1\neq x_2$, the set~$A$ contains a local geodesic joining~$x_1$~and~$x_2$;
  \item\label{IT2} for every $x_1,x_2\in A\subs X$, $x_1\neq x_2$, the set~$A$ contains a geodesic joining~$x_1$ and~$x_2$;
  \item\label{IT3} every geodesic with end-points in~$A$ is contained in~$A$;
  \item\label{IT4} every local geodesic with end-points in~$A$ is contained in~$A$.
\ifdefined\Pavel\end{romlist}\else\end{enumerate}\fi
Clearly, \ref{IT1} $\Leftarrow$ \ref{IT2} $\Leftarrow$ \ref{IT3} $\Leftarrow$ \ref{IT4}. Moreover, if a local geodesic joining two distinct points is unique, as we have \textit{e.g.} for a simply connected domain endowed with the hyperbolic distance, then the four conditions are equivalent. However, this is not true for multiply connected hyperbolic domains in~$\C$, with an annulus giving a simplest example. It is therefore natural to ask the following question.

\begin{problem}
Fix $\mu\in\Real$ and two hyperbolic (multiply connected) domains $D_j\subs\C$, $j=1,2$, equipped with the hyperbolic distance functions~$k_{D_j}$. Further, let ${z_j\in D_j}$, ${j=1,2}$. In this setting, which of the conditions \ref{IT1}\,-- \ref{IT4} hold for  $A:=\mathcal D_\mu(f;D_1,D_2)$ and for every holomorphic map ${f:D_1\to D_2}$?
\end{problem}

\begin{remark*}
Note that the above problem makes sense also in several complex variables. For example, one can ask the same question with $D_1$ and $D_2$ replaced by Kobayashi hyperbolic complex manifolds provided that the first manifold is a geodesic metric space with respect to the Kobayashi distance.\footnote{This is the case,  \textit{e.g.}, for bounded convex domains in~$\C^n$, see \cite[Theorem~2.6.19]{Abate1}; and for complete Kobayashi hyperbolic complex manifolds on which the Kobayashi metric is Finsler, as follows from the Hopf\,--\,Rinow Theorem, see \textit{e.g.} \cite[Corollary~1 in~\S8--7]{DiffGeom}.}
\end{remark*}

In contrast to the level sets~$\mathcal D_\mu(f)$, the hyperbolic-density family
$$
\Omega_\lambda(f):=\Big\{z\in\UD\colon \frac{\lambda_\UD(z)}{\lambda_\UD(f(z))}<\lambda\Big\}
$$
is not conformally invariant. Therefore, replacing~$\UD$ with another hyperbolic domain, even simply connected, may change the situation.

\begin{problem}
For which pairs of hyperbolic domains $D_j\subs\C$, $j=1,2$, the sets
$$
  \Omega_\lambda(f;D_1,D_2):=\Big\{z\in\UD\colon \frac{\lambda_{D_1}(z)}{\lambda_{D_2}(f(z))}<\lambda\Big\}
$$
satisfy, for every holomorphic map $f:D_1\to D_2$ and every $\lambda\ge1$, at least one of the conditions~\ref{IT1}\,-- \ref{IT4}? Here $\lambda_{D_j}$, ${j=1,2}$, stands for the density of the hyperbolic metric in~$D_j$.
\end{problem}

Returning to the original setting of the paper, we have another question, which seems to be of interest too.
\begin{problem}
  Fix some ${\mu\in\Real}$. For which strictly increasing functions $\Phi:[0,+\infty)\to\Real$ the sets $\mathcal D_{\Phi,\mu}(f)$ consisting of all ${z\in\UD}$ satisfying
  $$
    \Phi\big(k_\UD(z,0)\big)\,-\,\Phi\big(k_\UD(f(z),0)\big)\,<\,\mu
  $$
  are hyperbolically convex whenever $f:\UD\to\UD$ is holomorphic?
\end{problem}
Clearly, for $\mu=0$, every strictly increasing function~$\Phi$ defines the same hyperbolically convex set ${\mathcal D_{\Phi,0}(f)=\mathcal D(f)}$. Furthermore, our results show that the functions $\Phi_{-}(x):=x$ if ${\mu<0}$ and $\Phi_{+}(x):=\log\cosh(x/2)$ if~${\mu>0}$ are suitable choices. Given some fixed~${\mu\neq0}$, does there exist any other~$\Phi$ for which $\mathcal D_{\Phi,\mu}(f)$ is hyperbolically convex? Are there any choices valid for all $\mu$ in some open interval rather than for a single fixed value, aside from multiples of~$\Phi_{-}$ and~$\Phi_{+}$?

We conclude this section with a question related to the notion of geodesic convexity of a \textit{real-valued function}. Recall that a function ${u:X\to\Real}$ in a geodesic metric space~$(X,k)$ is said to be \textsl{geodesically} (or \textsl{geodetically}) \textsl{convex} if  for every geodesic~${\gamma\subs X}$ the function $u\circ \gamma$ of a real variable is convex.

It is easy to see that the sublevel sets of a geodesically convex function~$u$ are geodesically convex sets, actually in the strongest sense~\ref{IT4}. In this respect, it is worth noticing the following: our argument in the proof of Theorem~\ref{TH_main1} shows that for every ${\lambda>1}$, every holomorphic self-map ${f:\UD\to\UD}$ and every point ${\zeta\in\partial\Omega_{\lambda}(f)}$, the function ${u_1(z):= \lambda|z|^2-|f(z)|^2}$, restricted to the geodesic $\gamma_\zeta$ tangent at~$\zeta$ to ${\partial\Omega_{\lambda}(f)}$, is convex in a neighbourhood of~$\zeta$ because $(u_1\circ\gamma_\zeta)''$ is positive at the point~$\gamma_{\zeta}^{-1}(\zeta)$. A similar observation applies to the function $u_2(z):= |z| -(1+a|z|)|f(z)|$, ${a\in(0,1)}$,  near boundary points of~${\mathcal D_\mu(f)}$ with $\mu:={\log\big((1-a)/(1+a)\big)}$.

\begin{problem}
Is it possible to establish Theorems~\ref{TH_main1} and~\ref{TH_main2} by proving geodesic convexity of $u(z):=F\big(z,f(z)\big)$ in~$(\UD,k_\UD)$ for a suitable choice of ${F:\UD^2\to\Real}$?
\end{problem}

%%%%%%%%%%%%%%%%%%%%%%%%%%%%%%%%%%%%%
\begin{bibdiv}
\begin{biblist}
\bib{Abate1}{book}{
   author={Abate, Marco},
   title={Iteration theory of holomorphic maps on taut manifolds},
   series={Research and Lecture Notes in Mathematics. Complex Analysis and
   Geometry},
   publisher={Mediterranean Press, Rende},
   date={1989},
   pages={xvii+417},
   review={\MR{1098711}},
}

\bib{AbateNotes}{article}{
 title={The Kobayashi distance in holomorphic dynamics and operator theory},
 subtitle={lecture notes of a short course given in the school ``Aspects m\'etriques et dynamiques en analyse complexe", Lille, May 2015},
 author={Abate, Marco},
 journal={ArXiv:1509.01363},
 date={4 Sep 2015},
 doi={10.48550/arXiv.1509.01363}
}

\bib{Abate2}{book}{
   author={Abate, Marco},
   title={Holomorphic dynamics on hyperbolic Riemann surfaces},
   series={De Gruyter Studies in Mathematics},
   volume={89},
   publisher={De Gruyter, Berlin},
   date={[2023] \copyright 2023},
   pages={xiii+356},
   isbn={978-3-11-060197-8},
   isbn={978-3-11-060105-3},
   isbn={978-059874-2},
   review={\MR{4544891}},
}

\bib{AMP2019}{article}{
   author={Arango, Juan},
   author={Mej\'{\i}a Duque, Diego},
   author={Pommerenke, Christian},
   title={Level curves for analytic self-maps of the unit disk},
   journal={Monatsh. Math.},
   volume={190},
   date={2019},
   number={3},
   pages={413--423},
   issn={0026-9255},
   review={\MR{4018437}},
   doi={10.1007/s00605-019-01272-y},
}

\bib{Texas}{article}{
   author={Barnard, Roger W.},
   author={Cole, Leah},
   author={Pearce, Kent},
   author={Williams, G. Brock},
   title={The sharp bound for the deformation of a disc under a
   hyperbolically convex map},
   journal={Proc. London Math. Soc. (3)},
   volume={93},
   date={2006},
   number={2},
   pages={395--417},
   issn={0024-6115},
   review={\MR{2251157}},
   doi={10.1112/S0024611506015917},
}

\bib{HBmetric1}{article}{
   author={Beardon, A. F.},
   author={Minda, D.},
   title={The hyperbolic metric and geometric function theory},
   conference={
      title={Quasiconformal mappings and their applications},
   },
   book={
      publisher={Narosa, New Delhi},
   },
   isbn={978-81-7319-807-6},
   isbn={81-7319-807-1},
   date={2007},
   pages={9--56},
   review={\MR{2492498}},
}

\bib{BK2024}{article}{
   author={Betsakos, Dimitrios},
   author={Karamanlis, Nikolaos},
   title={On the monotonicity of the speeds for semigroups of holomorphic
   self-maps of the unit disk},
   journal={Trans. Amer. Math. Soc.},
   volume={377},
   date={2024},
   number={2},
   pages={1299--1319},
   issn={0002-9947},
   review={\MR{4688550}},
   doi={10.1090/tran/9048},
}

\bib{CarmPomm1997}{article}{
   author={Carmona, Joan Josep},
   author={Pommerenke, Christian},
   title={Twisting behaviour of conformal maps},
   journal={J. London Math. Soc. (2)},
   volume={56},
   date={1997},
   number={1},
   pages={16--36},
   issn={0024-6107},
   review={\MR{1462823}},
   doi={10.1112/S0024610797005334},
}

\bib{DiffGeom}{book}{
   author={Chern, S. S.},
   author={Chen, W. H.},
   author={Lam, K. S.},
   title={Lectures on differential geometry},
   series={Series on University Mathematics},
   volume={1},
   publisher={World Scientific Publishing Co., Inc., River Edge, NJ},
   date={1999},
   pages={x+356},
   isbn={981-02-4182-8},
   review={\MR{1735502}},
   doi={10.1142/3812},
}

\bib{Resolvents}{article}{
   author={Elin, Mark},
   author={Shoikhet, David},
   author={Sugawa, Toshiyuki},
   title={Geometric properties of the nonlinear resolvent of holomorphic
   generators},
   journal={J. Math. Anal. Appl.},
   volume={483},
   date={2020},
   number={2},
   pages={123614, 18},
   issn={0022-247X},
   review={\MR{4037570}},
   doi={10.1016/j.jmaa.2019.123614},
}

\bib{Flinn}{article}{
   author={Flinn, Barbara Brown},
   title={Hyperbolic convexity and level sets of analytic functions},
   journal={Indiana Univ. Math. J.},
   volume={32},
   date={1983},
   number={6},
   pages={831--841},
   issn={0022-2518},
   review={\MR{0721566}},
   doi={10.1512/iumj.1983.32.32056},
}

\bib{BProd-book}{book}{
   author={Garcia, Stephan Ramon},
   author={Mashreghi, Javad},
   author={Ross, William T.},
   title={Finite Blaschke products and their connections},
   publisher={Springer, Cham},
   date={2018},
   pages={xix+328},
   isbn={978-3-319-78246-1},
   isbn={978-3-319-78247-8},
   review={\MR{3793610}},
   doi={10.1007/978-3-319-78247-8},
}

\bib{Joergensen}{article}{
   author={J\o rgensen, Vilhelm},
   title={On an inequality for the hyperbolic measure and its applications
   in the theory of functions},
   journal={Math. Scand.},
   volume={4},
   date={1956},
   pages={113--124},
   issn={0025-5521},
   review={\MR{0084584}},
   doi={10.7146/math.scand.a-10460},
}

\bib{MaKourou}{article}{
   author={Kourou, Maria},
   title={Length and area estimates for (hyperbolically) convex conformal
   mappings},
   journal={Comput. Methods Funct. Theory},
   volume={18},
   date={2018},
   number={4},
   pages={723--750},
   issn={1617-9447},
   review={\MR{3874891}},
   doi={10.1007/s40315-018-0254-2},
}

\bib{HBmetric2}{article}{
   author={Kraus, Daniela},
   author={Roth, Oliver},
   title={Conformal metrics},
   conference={
      title={Topics in modern function theory},
   },
   book={
      series={Ramanujan Math. Soc. Lect. Notes Ser.},
      volume={19},
      publisher={Ramanujan Math. Soc., Mysore},
   },
   isbn={978-93-80416-12-0},
   date={2013},
   pages={41--83},
   review={\MR{3220950}},
}

\bib{ZipperAlg}{article}{
   author={Marshall, Donald E.},
   author={Rohde, Steffen},
   title={Convergence of a variant of the zipper algorithm for conformal
   mapping},
   journal={SIAM J. Numer. Anal.},
   volume={45},
   date={2007},
   number={6},
   pages={2577--2609},
   issn={0036-1429},
   review={\MR{2361903}},
   doi={10.1137/060659119},
}

\bib{MP2000}{article}{
   author={Mej\'ia, Diego},
   author={Pommerenke, Christian},
   title={On hyperbolically convex functions},
   journal={J. Geom. Anal.},
   volume={10},
   date={2000},
   number={2},
   pages={365--378},
   issn={1050-6926},
   review={\MR{1766488}},
   doi={10.1007/BF02921830},
}

\bib{MP2005}{article}{
   author={Mej\'{\i}a, Diego},
   author={Pommerenke, Christian},
   title={The analytic fixed point function in the disk},
   journal={Comput. Methods Funct. Theory},
   volume={5},
   date={2005},
   number={2},
   pages={275--299},
   issn={1617-9447},
   review={\MR{2205415}},
   doi={10.1007/BF03321099},
}

\bib{Rohde}{article}{
   author={Rohde, Steffen},
   title={Dimension distortion of hyperbolically convex maps},
   journal={Proc. Amer. Math. Soc.},
   volume={135},
   date={2007},
   number={4},
   pages={1169--1173},
   issn={0002-9939},
   review={\MR{2262922}},
   doi={10.1090/S0002-9939-06-08562-5},
}

\bib{ShapiroBook}{book}{
   author={Shapiro, Joel H.},
   title={Composition operators and classical function theory},
   series={Universitext: Tracts in Mathematics},
   publisher={Springer-Verlag, New York},
   date={1993},
   pages={xvi+223},
   isbn={0-387-94067-7},
   review={\MR{1237406}},
   doi={10.1007/978-1-4612-0887-7},
}

\bib{Solynin2}{article}{
   author={Solynin, Alexander Yu.},
   title={The analytic fixed-point function and its properties},
   language={Russian, with Russian summary},
   journal={Zap. Nauchn. Sem. S.-Peterburg. Otdel. Mat. Inst. Steklov.
   (POMI)},
   volume={337},
   date={2006},
   pages={238--252, 292},
   issn={0373-2703},
   translation={
      journal={J. Math. Sci. (N.Y.)},
      volume={143},
      date={2007},
      number={3},
      pages={3153--3160},
      issn={1072-3374},
   },
   review={\MR{2271966}},
   doi={10.1007/s10958-007-0199-x},
}

\bib{SolyninPAMS}{article}{
   author={Solynin, Alexander Yu.},
   title={Hyperbolic convexity and the analytic fixed point function},
   journal={Proc. Amer. Math. Soc.},
   volume={135},
   date={2007},
   number={4},
   pages={1181--1186},
   issn={0002-9939},
   review={\MR{2262924}},
   doi={10.1090/S0002-9939-06-08661-8},
}

\bib{Sugawa2005}{article}{
   author={Sugawa, Toshiyuki},
   title={A self-duality of strong starlikeness},
   journal={Kodai Math. J.},
   volume={28},
   date={2005},
   number={2},
   pages={382--389},
   issn={0386-5991},
   review={\MR{2153925}},
   doi={10.2996/kmj/1123767018},
}

\end{biblist}
\end{bibdiv}
%%%%%%%%%%%%%%%%%%%%%%%%%%%%%%%%%%%%%
\end{document}